\title{New Lower Bounds For Essential Covers Of The Cube}
\author{Igor Araujo \and J\'ozsef Balogh \and Let\'icia Mattos}
\address{Department of Mathematics, University of Illinois at Urbana-Champaign, Urbana, Illinois 61801, USA (I.~Araujo)}\email{igoraa2@illinois.edu}
\address{Department of Mathematics, University of Illinois at Urbana-Champaign, Urbana, Illinois 61801, USA (J.~Balogh)}\email{jobal@illinois.edu}
\address{Freie Universität Berlin and Berlin Mathematical School (BMS/MATH+), Arnimallee 3, 14195 Berlin, Germany (L.~Mattos)}\email{lmattos@zedat.fu-berlin.de}
\thanks{\rule[-.2\baselineskip]{0pt}{\baselineskip}%
	I. Araujo was partially supported by UIUC Campus Research Board RB 22000. \\
	J. Balogh was partially supported by NSF Grant DMS-1764123, Arnold O. Beckman Research Award (UIUC Campus Research Board RB 22000), the Langan Scholar Fund (UIUC), the Simons Fellowship, and NSF RTG Grant DMS-1937241. \\
	L.~Mattos was supported by the Deutsche Forschungsgemeinschaft (DFG, German Research Foundation) under Germany's Excellence Strategy – The Berlin Mathematics Research Center MATH+ (EXC-2046/1, project ID: 390685689).}
\date{}
\newtheorem{theorem}{Theorem}
\newtheorem{lemma}[theorem]{Lemma}
\newtheorem{prop}[theorem]{Proposition}
\newtheorem{conjecture}[theorem]{Conjecture}
\newtheorem{claim}[theorem]{Claim}
\newtheorem{defn}[theorem]{Definition}
\DeclareMathOperator*{\E}{\mathbb{E}}
\def\R{\mathbb{R}}
\def\P{\mathbb{P}}
\def\N{\mathbb{N}}
\def\floor#1{\lfloor #1 \rfloor}
\def\se{\subseteq}
\def\supp{\textup{supp}}
\newcommand{\ip}[2]{\langle #1,#2\rangle}
\newcommand{\eps}{\varepsilon}
\newcommand{\Ex}{\mathbb{E}}
\renewcommand{\Pr}[1]{\mathbb{P}\left(#1\right)}
\newcommand{\Pru}[1]{\mathbb{P}_{x \sim \{0,1\}^n}\left(#1\right)}
\definecolor{lblue}{rgb}{0.5,0.5,1}
\newcommand{\eq}[1]{\begin{equation}\label{eq:#1}}
	\newcommand{\eqe}{\end{equation}}
\newcommand\restr[2]{{
		\left.\kern-\nulldelimiterspace 
		#1 
		\vphantom{\big|} 
		\right|_{#2} 
}}
\begin{document}
	
	\begin{abstract}
		An \emph{essential cover} of the vertices of the $n$-cube $\{0,1\}^n$ by hyperplanes is a mini\-mal covering where no hyperplane is redundant and every variable appears in the equation of at least one hyperplane. Linial and Radhakrishnan gave a construction of an essential cover with $\lceil \frac{n}{2} \rceil + 1$ hyperplanes and showed that $\Omega(\sqrt{n})$ hyperplanes are required. Recently, Yehuda and Yehudayoff improved the lower bound by showing that any essential cover of the $n$-cube contains at least $\Omega(n^{0.52})$ hyperplanes. In this paper, building on the method of Yehuda and Yehudayoff, we prove that $\Omega \left( \frac{n^{5/9}}{(\log n)^{4/9}} \right)$ hyperplanes are needed.
	\end{abstract}
	
	\maketitle
	
	\section{Introduction} \label{sec:intro}
	
	An \emph{essential cover} of the vertices of the $n$-cube $\{0,1\}^n$ by hyperplanes is a minimal covering where every variable appears in the equation of at least one hyperplane and no hyperplane is redundant.
	Linial and Radhakrishnan~\cite{LINIAL2005331} introduced this notion in 2005, which can be formally defined as follows.
	Throughout this paper, $\ip{\cdot}{\cdot}$ denotes the scalar product and $[n]:=\{1,2,\ldots,n\}$.
	A collection of $k$ hyperplanes with equations $\ip{v_i}{x} = \mu_i$, where $x, v_i\in \R^n$, and $\mu_i\in \R$ for $i \in [k]$, forms an \emph{essential cover} of the $n$-cube $\{0,1\}^n$ if 
	\begin{enumerate}
		\item[$(E1)$] For every $x \in \{0, 1\}^n$, $\ip{v_i}{x} = \mu_i$ for some $i \in [k]$;\label{essential-condition:cover}
		\item[$(E2)$] For every $j \in [n]$, there exists $i \in [k]$ such that $v_i$ satisfies $v_{ij} \neq 0$;\label{essential-condition:all-vars}
		\item[$(E3)$] For every $i \in [k]$, there is a vertex $x\in \{0, 1\}^n$ such that $\ip{v_i}{x} = \mu_i$ but $\ip{v_j}{x} \neq \mu_j$ for all $j \neq i$. \label{essential-condition:minimal}
	\end{enumerate}
	
	In the same paper, Linial and Radhakrishnan showed that any essential cover
	has $\Omega(\sqrt{n})$ hyperplanes. However, this bound is far from the best-known constructions, which were also obtained in~\cite{LINIAL2005331}.
	For every $n \in \mathbb{N}$, they constructed essential covers of size $\lceil \frac{n}{2} \rceil +1$. When $n$ is even, one of the covers is given by the hyperplanes defined by the equations $x_1+\ldots+x_n = n/2$ and $x_{2i-1}-x_{2i}=0$, for $i \in [n/2]$.
	This construction can also be adapted for odd $n$.
	Even though no explicit conjecture on the size of the smallest essential cover was made by Linial and Radhakrishnan, $\lceil n/2 \rceil +1$ hyperplanes are likely to be needed. In 2013, Saxton~\cite{SAXTON2013971} formulated the following conjecture concerning permanents of matrices that, if true, would imply that every essential cover has at least $\lceil n/2 \rceil +1$ hyperplanes.
	
	\begin{conjecture}[Saxton~\cite{SAXTON2013971}]
		If $B \in \R^{m \times n}$ is a matrix such that for every $J \subset [m]$ we have $\left| \bigcup_{j \in J} \{i\in[n]: B_{ij}\neq 0 \} \right| \ge 2|J|-1$, then $B$ contains an $m\times m$ submatrix with non-zero permanent.
	\end{conjecture}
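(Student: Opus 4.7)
My plan is to attempt induction on $m$, splitting the inductive step by whether the expansion hypothesis is tight on a proper row subset. Write $N(J) := \bigcup_{j \in J}\{i \in [n] : B_{ij} \neq 0\}$, so the hypothesis reads $|N(J)| \ge 2|J| - 1$. Since $|N(J)| \ge |J|$, Hall's theorem immediately supplies a system of distinct representatives, so the support of $B$ contains a perfect matching into some $m$-set of columns, and the \emph{formal} permanent on those columns is nonzero as a polynomial in the entries; the real task is to prevent the specific real entries of $B$ from conspiring to cancel the value.

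The base case $m = 1$ is immediate from the hypothesis with $J = \{1\}$. For the inductive step, consider first the \emph{slack} case in which $|N(J)| \ge 2|J|$ for every nonempty $J \subsetneq [m]$. Delete any row $r$; the resulting $(m - 1) \times n$ matrix still satisfies the hypothesis, so induction produces an $(m-1) \times (m-1)$ submatrix $M'$ on some column set $S$ with nonzero permanent. Since $|N([m]) \setminus S| \ge (2m - 1) - (m - 1) = m$, there are at least $m$ candidate columns to adjoin. For each such $c$, Laplace expansion along row $r$ gives $\mathrm{perm}(M' \cup \{c\}) = B_{rc}\,\mathrm{perm}(M') + R(c)$, where $R(c)$ is a signed combination of $(m-1) \times (m-1)$ minors of the $m \times (m-1)$ matrix on rows $[m]$ and columns $S$. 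Viewing these expressions as linear functionals of the added column $c$, the plan is to argue that if all $m$ candidates yielded a vanishing permanent, one could extract a linear dependency among cofactors of $B$ contradicting the rank afforded by the slack expansion.

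In the \emph{tight} case, fix a minimal $J^* \subsetneq [m]$ with $|N(J^*)| = 2|J^*| - 1$; submodularity of $|N(\cdot)|$ makes minimal tight sets well-behaved. The block $(J^*, N(J^*))$ inherits the hypothesis directly, while for any $J \subseteq [m] \setminus J^*$ the inequality $|N(J \cup J^*)| \ge 2|J| + 2|J^*| - 1$ together with $|N(J^*)| = 2|J^*| - 1$ yields $|N(J) \setminus N(J^*)| \ge 2|J|$, so the complementary block $([m] \setminus J^*, [n] \setminus N(J^*))$ satisfies the hypothesis with strict slack. Induction applied to each block yields submatrices with nonzero permanent, and since their column sets are disjoint the resulting $m \times m$ submatrix is block-diagonal up to permutation, so its permanent factors as a product of the two block permanents and is therefore nonzero.

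The principal obstacle is the algebraic cancellation in the slack case: the combinatorial slack $|N(J)| \ge 2|J| - 1$ is strictly stronger than Hall's condition, yet converting this into a guarantee that some permanent does not vanish requires an additional ingredient beyond raw Laplace expansion. The routes that strike me as most promising are (i) a combinatorial-nullstellensatz-style argument treating $B$ as a specialization of a generic matrix with identical support pattern, so that non-vanishing of the permanent polynomial in the generic matrix descends to $B$ through a carefully chosen witness monomial, or (ii) a column-swap exchange argument that locally repairs cancellations by trading one column of $S$ for a fresh column in $N([m]) \setminus S$ and tracking how the permanent evolves. Without one of these ingredients, the direct approach sketched above leaves a genuine gap in the slack case.
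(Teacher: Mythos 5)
This statement is not a theorem of the paper: it is an \emph{open conjecture}, attributed to Saxton (2013), which the paper records only for context and does not prove. There is accordingly no in-text argument to compare against; the proposal must stand on its own, and by your own closing admission it does not, since the slack case is left as ``a genuine gap'' --- and that gap is the whole conjecture.

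To separate what works from what does not: the tight-case reduction is sound. If $J^* \subsetneq [m]$ is nonempty with $|N(J^*)| = 2|J^*| - 1$, your computation $|N(J)\setminus N(J^*)| \ge 2|J|$ for $J \subseteq [m]\setminus J^*$ is correct, and since every row in $J^*$ vanishes outside $N(J^*)$, any $m\times m$ submatrix assembled from a $J^* \times S_1$ block with $S_1 \subseteq N(J^*)$ and a $([m]\setminus J^*)\times S_2$ block with $S_2 \cap N(J^*) = \emptyset$ is block \emph{triangular} (not diagonal, but that suffices), so its permanent factors as the product of the two block permanents. Submodularity and minimality of $J^*$ play no role in this; and if $J^*$ is chosen minimal, its block is then strictly slack, so the slack case cannot be avoided by recursing through the tight case. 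The slack case is where the proposal fails. After deleting a row $r$ and obtaining by induction a nonzero $(m-1)$-permanent $\mathrm{perm}(M')$ on column set $S$, the permanent of the $m\times m$ extension by a column $c\notin S$ equals $\sum_{i\in[m]} B_{ic}\,q_i$, where $q_i := \mathrm{perm}\big(B[([m]\setminus\{i\})\times S]\big)$ and $q_r = \mathrm{perm}(M')\ne 0$. You must exhibit a column outside $S$ not orthogonal to the fixed nonzero vector $q\in\R^m$, but the hypothesis is purely combinatorial (about supports) and gives no reason that the remaining columns cannot all lie in the hyperplane $q^{\perp}$; that is exactly the cancellation the conjecture asserts is impossible, so invoking it would be circular. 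Neither of your fallback routes supplies the missing step. Route (i) is a category error: the permanent polynomial of a generic matrix with the given support pattern is already nonzero under Hall's condition alone (so the stronger hypothesis $|N(J)|\ge 2|J|-1$ is not being used at all there), and a nonzero polynomial can vanish at the specific real point $B$; the Combinatorial Nullstellensatz controls evaluations over product grids, not at a single point, so generic non-vanishing does not ``descend'' to $B$. Route (ii) is a direction, not an argument. The net effect of the proposal is a reduction of the conjecture to its slack case, which is a reasonable observation but leaves the problem open.
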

	
	The methods employed by Linial and Radhakrishnan~\cite{LINIAL2005331} to show the $\Omega(\sqrt{n})$ bound are purely algebraic. They applied a result of Alon and Füredi~\cite{ALON199379} to show that 
	if the hyperplanes with equations $\ip{v_i}{x} = \mu_i$, for $i \in [k]$, form an essential cover of the $n$-cube then $|\supp(v_i)|<2k$ for every $i \in [k]$. Here, for a vector $v_i \in \mathbb{R}^n$, we denote by $\supp(v_i):= \{ j \in [n]: v_{ij} \neq 0\}$ the \emph{support} of $v_i$, i.e, the indices of the non-zero entries of $v_i$.
	The $\Omega(\sqrt{n})$ bound immediately follows from this result combined with property (E2), which is equivalent to $|\cup_i\supp(v_i)|\ge n$.
	In~\cite{LINIAL2005331}, Linial and Radhakrishnan also applied the Littlewood--Offord Lemma~\cite{littlewood1943number} to obtain an alternative proof for a weaker result that any essential cover has $\Omega(n^{1/3})$ hyperplanes.
	Although applying the Littlewood--Offord Lemma crudely gives a weaker bound, it gives some insight into the role of the support of the vectors in an essential cover.

	The Littlewood--Offord Lemma states that a hyperplane with equation $\ip{v}{x} = \mu$ contains at most $2^{n}/\sqrt{|\supp(v)|}$ points of the $n$-cube $\{0,1\}^n$.
	Thus, when the support is large, the number of vertices covered by the hyperplane is small.
	Let us briefly explain the idea of how this is used to show that every essential cover has $\Omega(n^{1/3})$ hyperplanes.
	Suppose that the hyperplanes with equations $\ip{v_i}{x} = \mu_i$, for $i \in [k]$, form an essential cover of the $n$-cube.
	As every variable appears in the equation of at least one hyperplane, $|\supp(v_i)|$ is at least $n/k$ on average. 
	Thus, on average, every hyperplane covers at most $(k/n)^{1/2}$ proportion of the $n$-cube. 
	If $(k/n)^{1/2}k<1$, then there is an uncovered vertex, and hence we must have $k \ge n^{1/3}$.
	
	Recently, a new approach was introduced by Yehuda and Yehudayoff~\cite{yehuda2021lower} to show that any essential cover of the $n$-cube contains $\Omega(n^{0.52})$ hyperplanes.
	This improves the previous bound of $\Omega(\sqrt{n})$ by Linial and Radhakrishnan~\cite{LINIAL2005331}.
	Their proof applies a lemma of Bang~\cite{bang1951solution}, which was used to solve Tarski’s plank problem in Euclidean spaces.
	Roughly speaking, Bang's lemma states that for a $k\times k$ symmetric matrix $M$ with positive diagonal entries and a vector $\mu \in \mathbb{R}^k$, there exists $\eps \in \{-1,1\}^k$ which is far from the hyperplanes $\ip{M_i}{x}=\mu_i$, for $i \in [k]$.
	The idea is to apply Bang's lemma to $VV^{T}$, where $V$ is a properly normalized matrix, whose rows are associated with the vectors of an essential cover.
	Then, one would hope to show that the vector $V^{T}\eps$ is sufficiently close to some vertex $u \in \{0,1\}^n$, but it is far enough from any of the hyperplanes $\ip{V_i}{x}=\mu_i$.
	This implies that $u$ itself is far from all hyperplanes $\ip{V_i}{x}=\mu_i$, for all $i \in [k]$.
	When making this argument precise, some issues come along the way. For example, one technical difficulty is that we need to control the column norm of $V$ to show that $V^{T}\eps$ is close to some vertex of $\{0,1\}^n$.
	
	The main contribution of this paper is an improvement on the lower bound on the number of hyperplanes needed in an essential cover of the $n$-cube.
	The main idea behind the proof is to combine the proof strategy of Yehuda and Yehudayoff~\cite{yehuda2021lower} with a stronger result regarding covering systems whose associated matrix has small column norm.
	
	\begin{theorem} \label{thm:main}
		An essential cover of the $n$-cube has $\Omega \left( \frac{n^{5/9}}{(\log n)^{4/9}} \right)$ hyperplanes.
	\end{theorem}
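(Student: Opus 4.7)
The plan is to follow the blueprint of Yehuda and Yehudayoff, but with a sharper analysis of the normalized matrix $V$ associated to the cover. Suppose $\langle v_i, x\rangle=\mu_i$, $i\in[k]$, is an essential cover of $\{0,1\}^n$. After rescaling rows so that $\|v_i\|_2=1$, form $V\in\R^{k\times n}$ with rows $v_i$. Bang's lemma applied to the Gram matrix $VV^T$ produces a sign vector $\varepsilon\in\{-1,1\}^k$ which is uniformly far from each of the equations $\langle V_i,x\rangle=\mu'_i$. The aim is to round the real point $y:=V^T\varepsilon\in\R^n$ to a cube vertex $u\in\{0,1\}^n$ without losing distance from any of the hyperplanes; by (E1) this gives a contradiction. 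For coordinate $j$, the damage of rounding $y_j$ to a Boolean value, as seen by hyperplane $i$, is at most $|V_{ij}|$, so the rounding succeeds provided the column norms $\|V^{(j)}\|_2$ are all small.

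First I would split into cases based on a threshold $t=t(n,k)$, to be optimized later. Let $H:=\{j\in[n]:\|V^{(j)}\|_2>t\}$ be the heavy columns and $L:=[n]\setminus H$ the light columns. Since $\sum_j \|V^{(j)}\|_2^2 = k$, at most $k/t^2$ columns are heavy. If $|H|$ is not too large, then a probabilistic restriction fixes the variables in $H$ in a way that preserves enough hyperplanes to leave an essential (sub)cover of $\{0,1\}^L$ whose associated matrix has every column norm at most $t$. Bang's lemma combined with the greedy rounding described above then succeeds on the restricted cube, forcing $k = \Omega(|L|/t^2)$ up to polylog factors. This is the promised strengthening of the covering estimate for systems with small column norm.

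In the complementary case, many coordinates are heavy. Here the point is that a hyperplane whose normal has mass concentrated on few coordinates covers an unusually small fraction of the cube via an improved Littlewood--Offord-type estimate applied to the light part of the vector. Combined with (E3), which forces each hyperplane to contribute a distinguishing vertex, this situation again produces a lower bound on $k$. The two quantitative regimes are linked by the trade-off between the column norm threshold $t$ and the number of heavy coordinates $|H|\le k/t^2$.

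Balancing the two cases in $t$ yields the exponent $5/9$: roughly, setting $t^2\asymp k^{1/2}/\polylog(n)$ equates the two forces and gives $k\gtrsim n^{5/9}/(\log n)^{4/9}$. The main obstacle I anticipate is carrying out the probabilistic restriction of the heavy variables so that essentiality and the column norm bound are simultaneously preserved with only a polylogarithmic loss; controlling the deviation of column norms under a random restriction is precisely what introduces the $(\log n)^{4/9}$ factor in the denominator.
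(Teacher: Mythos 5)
Your high-level blueprint is right in spirit---Bang's lemma for $VV^T$, a randomized rounding of $y = V^T\varepsilon$ to a cube vertex, and a heavy/light dichotomy on column norms---but the details as written contain genuine gaps that make the stated bound unreachable.

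First, the rounding step needs more than small column norms. To even define a randomized rounding of $y$, you must have $y\in[0,1]^n$, which in the Cauchy--Schwarz estimate $\|V^T\varepsilon\|_\infty \le \max_j \sqrt{|\supp(v_{*j})|}\cdot\|v_{*j}\|_2$ requires control of \emph{both} the column $\ell_2$-norm $\|v_{*j}\|_2$ and the column support $|\supp(v_{*j})|$. Your proposal tracks only the norm. If the column support is as large as $k$, then a column-norm threshold $t$ only gives $\|V^T\varepsilon\|_\infty \lesssim t\sqrt{k}$, which is far too large for the rounding. The paper isolates a block in which columns have support at most $16k^2/n$ (using the fact that every row of an essential matrix has support at most $2k$) and norm at most $W^{-1/2}$, and the relevant Bang-type condition is $2\alpha\beta\log(4\ell)\le 1$ in the notation of Proposition~\ref{prop:bang}; you cannot drop the $\alpha$ (support) parameter.

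Second, your single-pass heavy/light split is not enough. Once you try to ``fix the heavy variables,'' you create rows that vanish on the light block and rows that are spread across both blocks, and neither Littlewood--Offord nor Bang directly disposes of them. The paper handles this with an iterative decomposition (Lemma~\ref{lemma:seconddecomposition}) whose byproducts are: a block $K_1$ treated via (E3), a block $K_2$ of zero rows handled by Littlewood--Offord, a block $K_4$ of rows with ``$S$ scales'' handled by an exponential anti-concentration inequality (Lemma~\ref{lem:probBound}), and a clean $K_3\times N_1$ block for Bang. Your proposal alludes vaguely to ``an improved Littlewood--Offord-type estimate applied to the light part of the vector'' but does not produce the scales machinery, and the claim that ``a probabilistic restriction ... preserves essentiality'' is precisely the step that fails without this structure: restricting variables does not preserve essentiality of the induced system.

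Third, the balance as stated is wrong. With the column-norm threshold $t$ and the Frobenius bound $\sum_j\|v_{*j}\|_2^2 = k$, one gets $|H|\le k/t^2$; your choice $t^2 \asymp k^{1/2}$ gives $|H|\lesssim k^{1/2}$ heavy columns, which when pushed through the Bang argument yields only $k\gtrsim n^{2/5}$ up to logs, not $n^{5/9}$. The exponent $5/9$ arises from balancing three constraints, $W\gtrsim (\log n) k^2/n$ (Bang condition), $kSW\lesssim n$ (bounding the number of heavy columns after one pass), and $k\lesssim (SW)^{-2/5}n^{3/5}$ (the iterated-decomposition condition with exponent $\gamma=1/3$); your sketch does not generate the third of these.
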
    
	
	\noindent\textbf{Remark.}~Yehuda and Yehudayoff's proof ~\cite{yehuda2021lower} gives a lower bound of $n^{12/23-o(1)} \approx n^{0.5217-o(1)}$, while Theorem~\ref{thm:main} gives the lower bound of $n^{5/9-o(1)} \ge n^{0.5555}$. \vspace{1mm}
	
	For an essential cover with hyperplane equations $\ip{v_i}{x} = \mu_i$, for $i \in [k]$, denote by $V$ the $k \times n$ matrix whose $i$-th row is given by $v_i$.
	We call $V$ an \emph{essential} matrix and $Vx=\mu$ an \emph{essential covering system}.
	To prove Theorem~\ref{thm:main}, we suppose for contradiction that there exists an essential covering system $Vx=\mu$, where the number of rows in $V$ is $O \left( \frac{n^{5/9}}{(\log n)^{4/9}} \right)$.
	Then, we show that there exists a point $x \in \{0,1\}^n$ which is not covered by any of the equations $\ip{v_i}{x} = \mu_i$.
	The core of the argument is using the following facts.
	\begin{itemize}
		\item Rows with large support cannot cover many vertices (c.f.~Lemma \ref{anti-concentration:sparse-vectors}). 
		\item Rows corresponding to vectors with \emph{many scales} cannot cover many vertices (c.f.~De\-finition~\ref{definition:many_scales} and Lemma~\ref{lem:probBound}). 
		\item If a set of rows have small column norm, then we can find a vertex far from all hyperplanes (c.f.~Lemma~\ref{lem:bang}). Finding this sparse submatrix is possible because the size of the support of each row is bounded by $2k$ (c.f.~Lemma~\ref{lemma:nati}).
	\end{itemize} 
	
	The rest of the paper is structured as follows. In Section~\ref{sec:scales}, we state the Littlewood--Offord anti-concentration inequality and introduce the notion of a vector with many scales (c.f.~Definition~\ref{definition:many_scales}). In the same section, we also state and show a more refined anti-concentration inequality for vectors with many scales (c.f.~Lemma~\ref{lem:probBound}). 
	In Section~\ref{sec:bang}, we show that rows with small column norm cannot cover many vertices (c.f.~Proposition~\ref{prop:bang}). 
	In Section~\ref{sec:structure}, we prove the structural lemma for the matrix $V$ (c.f.~Lemma~\ref{lemma:seconddecomposition}) that will allow us  to explore the core idea mentioned above. Finally, in Section~\ref{sec:proof}, we prove Theorem~\ref{thm:main}. 
	We highlight that the main novelty from the proof in~\cite{yehuda2021lower} is Proposition~\ref{prop:bang}, where we obtain better upper bounds for the probabilities that a randomly selected vertex lies in a hyperplane.
	
	\section{Anti-concentration for vectors}
	\label{sec:scales}
	
	In this section, we introduce two anti-concentration inequalities that are used in the proof of Theorem~\ref{thm:main}.
	The first is the classical Littlewood--Offord Lemma~\cite{littlewood1943number}, which was proved by Erd\H{o}s~\cite{Erds1945OnAL} using Sperner's Theorem.
	The second is an exponential anti-concentration bound of Yehuda--Yehudayoff~\cite{yehuda2021slicing} for vectors with `nearly exponential decay'.
	These are called vectors with \emph{many scales}. 
	
	For $v \in \mathbb{R}^n$, we denote $\supp(v):= \{ i \in [n]: v_i \neq 0\}$ and
	$\P_{x \sim \{0, 1\}^{n}}$ the probability space generated by taking $x \in \{0, 1\}^{n}$ uniformly at random.
	
	\begin{lemma}[Littlewood--Offord] \label{anti-concentration:sparse-vectors} 
		For every $v \in \R^n\setminus \{0\}$  and $a \in \R$, we have
		\[\Pru{\langle x, v \rangle = a} \leq \dfrac{1}{\sqrt{|\supp(v)|}}.\]
	\end{lemma}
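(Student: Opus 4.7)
The plan is to follow Erd\H{o}s's classical proof of the Littlewood--Offord inequality, reducing the claim to Sperner's theorem on antichains in the Boolean lattice. Let $S = \supp(v)$ and $s = |S|$. The target is to show $\Pru{\langle x, v\rangle = a} \le 1/\sqrt{s}$.

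The first step is two reductions. Conditioning on the coordinates of $x$ outside $S$, and noting that those coordinates contribute nothing to $\langle x, v\rangle$, I can reduce to the case where $v$ has no zero entries, i.e.\ effectively $n = s$. Next, for each coordinate $i$ with $v_i < 0$, I apply the measure-preserving bijection $x_i \mapsto 1 - x_i$ on $\{0,1\}^n$, which turns $\langle x, v\rangle = a$ into $\langle x, v'\rangle = a'$ for a modified vector $v'$ whose entries are all strictly positive and a new constant $a'$. Thus it suffices to bound the probability of $\langle x, v'\rangle = a'$ in the case $v'_i > 0$ for all $i \in [n]$.

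The combinatorial heart of the argument is then the antichain observation: to each solution $x \in \{0,1\}^n$ of $\langle x, v'\rangle = a'$, associate the set $A_x = \{i \in [n] : x_i = 1\}$. If $A_x \subsetneq A_y$ for two such solutions, then $\langle y, v'\rangle - \langle x, v'\rangle = \sum_{i \in A_y \setminus A_x} v'_i > 0$, contradicting $\langle x, v'\rangle = \langle y, v'\rangle$. Hence $\{A_x : \langle x, v'\rangle = a'\}$ is an antichain in $2^{[n]}$, and by Sperner's theorem its size is at most $\binom{n}{\lfloor n/2 \rfloor}$. Dividing by $2^n$ and using the standard estimate $\binom{n}{\lfloor n/2\rfloor} \le 2^n/\sqrt{n}$ yields the lemma.

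I do not anticipate any real obstacles: the argument is entirely elementary. The only two delicate points are being careful in the initial conditioning step to confirm that the probability truly reduces to the $v_S$ restricted problem, and ensuring the central-binomial bound $\binom{n}{\lfloor n/2 \rfloor} \le 2^n/\sqrt{n}$ is valid for all $n \ge 1$ (a routine Stirling estimate). After these, the antichain observation combined with Sperner's theorem finishes the proof in one line.
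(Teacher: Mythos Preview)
Your proposal is correct and follows exactly the classical Erd\H{o}s argument via Sperner's theorem. Note, however, that the paper does not supply its own proof of this lemma: it merely states the result, cites Littlewood--Offord and Erd\H{o}s, and remarks that Erd\H{o}s proved it ``using Sperner's Theorem.'' So there is no paper proof to compare against beyond that one-line attribution, and your write-up is precisely the argument the paper is alluding to.
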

	
	The Littlewood--Offord Lemma is tight up to a multiplicative constant. If $v \in \{0,1\}^n$, then the event
	\[\langle x, v \rangle = \Ex (\langle x, v \rangle)\pm 4\sqrt{|\supp(v)|}\]
	occurs with constant probability. By the pigeonhole principle, it follows that there exists some $a \in \mathbb{Z}$ for which $\Pru{\langle x, v \rangle = a} = \Omega(|\supp(v)|^{-\frac{1}{2}})$.
	However, when $v$ has `nearly exponential decay', this bound can be considerably improved.
	For example, when $v=(1,2,\ldots,2^{n-1})$, then $\Pru{\langle x, v \rangle = a} \le 2^{-n}$.
	More generally, when the coordinates of $v$ decay nearly exponentially, then we expect an exponential-type anti-concentration bound. 
	Following \cite{yehuda2021slicing}, we introduce the notion of a vector with \emph{many scales}, which formalizes this notion of nearly exponential decay between the coordinates of a vector.
	
	For $A\se [n]$ and $v \in \mathbb{R}^n$, denote by $v^{A}$ the vector $v$ restricted to the set $A$.
	For $p \ge 1$ and $v \in \mathbb{R}^n$, let $\|v\|_p:= (\sum_{j=1}^n v_j^p)^{1/p}$ be the $\ell_p$-norm of $v$.
	
	\begin{defn} \label{definition:many_scales}
		The vector $v \in \R^n$ has $S$ scales if
		there exists a partition $[n] = P_1\cup \ldots \cup P_S$ for which $v^{(s)}=v^{P_s}$ satisfies
		$$\|v^{(s)}\|_2 \geq C_1 \|v^{(s+1)}\|_2$$
		for every $s < S$,
		where $C_1=4C_0^2$ and $C_0 = 4.706$.
		The smallest scale of $v$ with respect to the partition $P_1\cup \ldots \cup P_S$ is defined to be $v^{(S)}$ and the size of the smallest scale is $\|v^{(S)}\|_2$. 
	\end{defn} 
	
	Note that the definition of having $S$ scales depends on the constant $C_1=4\cdot(4.706)^2$.
	Let us briefly explain the motivation behind this choice.
	For a vector $v \in \mathbb{R}^n$ with $\ell_2$-norm 1, let 
	\[z_r = z_r(x) = \ip{x^{(r)}}{v^{(r)}} - \frac{1}{2}\sum\limits_{i \in P_r} v_i, \]
	where $x^{(r)}=x^{P_r}$.
	Using the Payley--Zygmund inequality (c.f.~Lemma~\ref{payley-zygmund}) and the second moment method, Yehuda and Yehudayoff~\cite{yehuda2021slicing} showed that the event 
	\begin{align}\label{eq::weakconcentration}
		\frac{\|v^{(r)}\|_2}{C_0} \leq |z_r| \leq C_0 \|v^{(r)}\|_2
	\end{align}
	occurs with probability at least $1/C_0$ for each $r \in [S]$, see Claim~\ref{clm:positiveProb}.
	Let $E$ be the set of indices $r$ for which~\eqref{eq::weakconcentration} holds. 
	Using Chernoff's inequality, we can show that with high probability $|E|=\Omega(S)$.
	Moreover, if $v$ is a vector with $S$ scales, then for $r, s \in E$ and $r<s$ we have
	\begin{align*}
		|z_{r}| \geq \frac{\|v^{(r)}\|_2}{C_0}
		\geq 4 C_0 \|v^{(s)}\|_2 \geq 4 |z_{s}|.
	\end{align*}
	Then $C_1$ is chosen to be $4C_0^2$ since it implies that the sequence $(|z_r|)_{r \in E}$ decay exponentially. 
	Once we have this property, we can show that the anti-concentration bound is $\exp(-\Omega(S))$.
	
	Before stating the anti-concentration bound of  Yehuda and Yehudayoff~\cite{yehuda2021slicing} for vectors with $S$ scales, 
	we state the Payley--Zygmund inequality.
	For self-completeness, we include its proof.
	
	\begin{lemma}[Payley--Zygmund inequality]\label{payley-zygmund}
		Let $\theta \in [0,1]$ and $Z$ be a non-negative random variable with finite variance. Then,
		\[\P \big (Z > \theta \E (Z) \big) \ge (1-\theta)^2 \frac{\E(Z)^2}{\E(Z^2)}.\]
	\end{lemma}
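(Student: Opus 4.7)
The plan is to run the standard one-line proof via Cauchy--Schwarz applied to a split of $\E(Z)$ across the event $\{Z \le \theta \E(Z)\}$ and its complement. Since the right-hand side $(1-\theta)^2 \E(Z)^2 / \E(Z^2)$ vanishes trivially when $\E(Z) = 0$, I may assume $\E(Z) > 0$.

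First I would write $Z = Z \cdot \mathbf{1}\{Z \le \theta \E(Z)\} + Z \cdot \mathbf{1}\{Z > \theta \E(Z)\}$ and take expectations. Because $Z \ge 0$, the first summand contributes at most $\theta \E(Z)$, giving
\[
(1-\theta) \E(Z) \le \E\big(Z \cdot \mathbf{1}\{Z > \theta \E(Z)\}\big).
\]
Second, I would bound the right-hand side by Cauchy--Schwarz:
\[
\E\big(Z \cdot \mathbf{1}\{Z > \theta \E(Z)\}\big) \le \sqrt{\E(Z^2)} \cdot \sqrt{\P\big(Z > \theta \E(Z)\big)},
\]
using that $\mathbf{1}\{\cdot\}^2 = \mathbf{1}\{\cdot\}$. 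Squaring and rearranging yields the claimed inequality.

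There is no real obstacle; the only thing to be mildly careful about is the degenerate cases ($\E(Z)=0$ or $\theta = 1$), which both reduce to a trivial inequality, and the use of finite variance, which is exactly what guarantees $\E(Z^2) < \infty$ so that the Cauchy--Schwarz bound is meaningful and can be inverted.
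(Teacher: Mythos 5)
Your proof is correct and follows essentially the same route as the paper's: split $\E(Z)$ over the event $\{Z > \theta\E(Z)\}$ and its complement, bound the first piece by $\theta\E(Z)$, apply Cauchy--Schwarz to the second, and rearrange. The extra remarks on the degenerate cases $\E(Z)=0$ or $\theta=1$ are harmless but not needed.
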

	
	\begin{proof}
		The expectation of $Z$ can be written as 
		\[\E(Z) = \E \big (Z\cdot 1_{\{ Z \le \theta \E(Z)\}} \big ) + \E \big (Z\cdot 1_{\{ Z > \theta \E(Z)\}} \big ).\]
		The first term of this sum  is bounded by 
		\[\E\big (Z\cdot 1_{\{ Z \le \theta \E(Z)\}} \big) \le \theta \E(Z).\]
		By the Cauchy--Schwarz inequality, the second term is bounded by
		\[\E\big (Z\cdot 1_{\{ Z > \theta \E(Z)\}}\big) \le \left(\E(Z^2)^{}\right)^{1/2} \cdot \P\big (Z> \theta \E(Z) \big)^{1/2}.\]
		Combining these bounds, we conclude that
		$\E(Z) \le \theta \E(Z) + \left(\E(Z^2)^{}\right)^{1/2} \P\big (Z> \theta \E(Z) \big)^{1/2}$. This implies 
		\[\P \big (Z > \theta \E (Z) \big) \ge (1-\theta)^2 \frac{\E(Z)^2}{\E(Z^2)}. \qedhere\]
	\end{proof}
	
	Let $x \in \{0,1\}^{n}$ be chosen uniformly at random. 
	The next claim states that if $\|v\|_2 = 1$, then $\ip{x}{v}$ is close to $\E(\ip{x}{v})$ with probability bounded away from $0$.
	As we do not have any assumption on $\supp(v)$, observe that we cannot hope to have a probability close to 1.
	
	\begin{claim}[Yehuda--Yehudayoff~\cite{yehuda2021slicing}]
		\label{clm:positiveProb}
		If $v=(v_1, \ldots, v_n) \in \R^n$ has $\ell_2$-norm $\|v\|_2 =1$, then for every $C_0\ge 4.706$ we have that
		$$\Pru{\frac{1}{C_0} \le \Big| \ip{x}{v} - \frac{1}{2}\sum_{i=1}^n v_i \Big| \leq C_0}
		\ge \frac{1}{C_0}.$$
	\end{claim}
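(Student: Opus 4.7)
The plan is to pass from $x \in \{0,1\}^n$ to Rademacher variables, compute the second and fourth moments of $\ip{x}{v} - \tfrac12\sum_i v_i$, and then combine a lower tail bound from Paley--Zygmund (Lemma~\ref{payley-zygmund}) with an upper tail bound from Markov's inequality.

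Setting $\eps_i := 2x_i - 1$, the vector $(\eps_1, \ldots, \eps_n)$ is uniform on $\{-1,1\}^n$, and a short computation gives
\[
Y \;:=\; \ip{x}{v} - \tfrac{1}{2}\sum_{i=1}^n v_i \;=\; \tfrac{1}{2}\ip{\eps}{v}.
\]
Using independence of the $\eps_i$ together with $\E[\eps_i] = 0$ and $\eps_i^2 = 1$, a direct expansion yields $\E[Y^2] = \tfrac{1}{4}\|v\|_2^2 = \tfrac14$. For the fourth moment I would expand $(\sum_i \eps_i v_i)^4$ and keep only the index quadruples in which every index appears an even number of times: the ``all equal'' case contributes $\|v\|_4^4$, while the three pair-pair matchings contribute $3\bigl(\|v\|_2^4 - \|v\|_4^4\bigr)$. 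Thus
\[
\E[Y^4] \;=\; \tfrac{1}{16}\bigl(3\|v\|_2^4 - 2\|v\|_4^4\bigr) \;\le\; \tfrac{3}{16}.
\]

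With these moments in hand, I would apply Lemma~\ref{payley-zygmund} to $Z := Y^2$ with $\theta = 4/C_0^2$ to obtain
\[
\P\bigl(|Y| > 1/C_0\bigr) \;=\; \P\bigl(Y^2 > \theta\,\E[Y^2]\bigr) \;\ge\; (1 - \theta)^2\,\frac{\E[Y^2]^2}{\E[Y^4]} \;\ge\; \frac{(1 - 4/C_0^2)^2}{3},
\]
and then pair this with the Markov bound $\P(|Y| > C_0) \le \E[Y^2]/C_0^2 = 1/(4C_0^2)$. Since the event $\{1/C_0 \le |Y| \le C_0\}$ equals $\{|Y| \ge 1/C_0\} \setminus \{|Y| > C_0\}$, subtracting gives
\[
\P\bigl(1/C_0 \le |Y| \le C_0\bigr) \;\ge\; \frac{(1 - 4/C_0^2)^2}{3} \;-\; \frac{1}{4 C_0^2}.
\]

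It remains to verify that the right-hand side is at least $1/C_0$ whenever $C_0 \ge 4.706$; equivalently, after clearing denominators and writing $t = 1/C_0$, that $64 t^4 - 35 t^2 - 12 t + 4 \ge 0$ on $(0, 1/4.706]$. The constant $4.706$ turns out to be precisely the relevant root of this polynomial, which accounts for its appearance as the threshold in the statement. The only step requiring any care is the fourth moment calculation, where one must correctly enumerate the surviving pairing patterns; the rest is a routine combination of two textbook inequalities.
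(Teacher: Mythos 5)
Your argument is correct and is essentially identical to the paper's proof: the paper works with $4Z = (\ip{\eps}{v})^2$ where $\eps_i = 2x_i - 1$, computes $\E[4Z] = 1$ and $\E[(4Z)^2] \le 3$ (the same second- and fourth-moment calculation you carry out for $Y = \tfrac12\ip{\eps}{v}$), then combines Paley--Zygmund with $\theta = 4/C_0^2$ and Markov's inequality to obtain exactly the bound $\tfrac13(1-4/C_0^2)^2 - \tfrac{1}{4C_0^2} \ge 1/C_0$. The only differences are cosmetic normalization choices; your identification of $4.706$ as (approximately) a root of $64t^4 - 35t^2 - 12t + 4$ is also correct and explains the threshold.
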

	
	\begin{proof}
		Define $Z = \big (\ip{x}{v}- \E(\ip{x}{v}) \big)^2$.
		Let us calculate the expected value of $4Z$.
		Note that 
		\[4Z= \left(\sum_{i=1}^n v_i \left (2x_i-1\right ) \right)^2.\]
		As $\E(2x_i-1)=0$ and $\E\big((2x_i-1)^2\big)=1$, we have
		\[\E (4Z) = \sum_{i=1}^n v_i^2 = 1.\]
		Now let us calculate the second moment of $4Z$. 
		As $\E(2x_i-1) = \E\big((2x_i-1)^3\big)=0$ and $\E\big((2x_i-1)^2\big)=\E\big((2x_i-1)^4\big)=1$, we have
		\[\E \big((4Z)^2\big) = \E \left( \sum_{i=1}^n v_i (2x_i-1) \right)^4 = 
		\sum_{i=1}^n v_i^4 + 6 \sum_{i\neq j} v_i^2v_j^2 \le 3 \left( \sum_{i=1}^n v_i^2 \right)^2 = 3.\]
		
		Let $C \ge 4.706$. As $\E(4Z) = 1$, by Markov's inequality we have
		\[\Pr{4Z \geq 4C^2} \leq \frac{1}{4C^2}.\] 
		As $\E(4Z) = 1$ and $\E \big((4Z)^2\big)\le 3$, by the Payley-Zygmond inequality (c.f.~Lemma~\ref{payley-zygmund}) we have
		\[\P \left (4Z > \dfrac{4}{C^2} \right) \ge \dfrac{1}{3}\left(1-\dfrac{4}{C^2} \right)^2.\]
		Combining the previous inequalities, it follows that
		\[\P \left ( \dfrac{1}{C^2} < Z < C^2 \right) \ge \dfrac{1}{3}\left(1-\dfrac{4}{C^2}\right)^2 -  \frac{1}{4C^2}.\]
		The last expression is at least $C^{-1}$ whenever $C \ge 4.706$.
	\end{proof}
	
	We are now ready to state and prove the anti-concentration bound of Yehuda and Yehudayoff~\cite{yehuda2021slicing} for vectors with $S$ scales.
	Throughout this paper, logarithms are in base $e$.
	
	\begin{lemma}[Yehuda--Yehudayoff~\cite{yehuda2021slicing}]
		\label{lem:probBound}
		There is a constant $C_2 > 1$ such that the following holds.
		If $v \in \R^n$ has $S$ scales and the size of the smallest scale is $\delta > 0$,
		then for every $a \in \R$ and $b \geq 2$ we have
		\[\P_{x \sim \{0, 1\}^n}\big (|\langle x,v \rangle - a| < b \delta\big) < C_2 \exp \left (-\tfrac{S}{C_2} + C_2 \log(b) \right).\]
	\end{lemma}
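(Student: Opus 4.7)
The plan is to follow the strategy outlined in the paragraph preceding Claim~\ref{clm:positiveProb}. First I rewrite
$\langle x,v\rangle = \tfrac{1}{2}\sum_i v_i + \sum_{r=1}^S z_r$
with $z_r = \langle x^{(r)}, v^{(r)}\rangle - \tfrac{1}{2}\sum_{i\in P_r} v_i$. Since the $z_r$ depend on disjoint blocks of coordinates they are mutually independent, and the task reduces to bounding $\P(|\sum_r z_r - c| < b\delta)$ for $c := a - \tfrac{1}{2}\sum_i v_i$.

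For each $r \in [S]$, let $B_r$ be the event $\|v^{(r)}\|_2/C_0 \le |z_r| \le C_0\|v^{(r)}\|_2$. Applying Claim~\ref{clm:positiveProb} to the unit vector $v^{(r)}/\|v^{(r)}\|_2$ gives $\P(B_r) \ge 1/C_0$, and the events $B_r$ are mutually independent. Setting $E = \{r : B_r \text{ holds}\}$, Chernoff's inequality yields $|E| \ge S/(2C_0)$ except with probability $\exp(-\Omega(S))$, which will be absorbed into the final bound.

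Conditioning on a fixed realization of $E$ with $|E| = k \ge S/(2C_0)$, on $\bigcap_{r\in E} B_r$, and on the values of $x^{(r)}$ for $r\notin E$, the problem becomes bounding $\P(|\sum_{r \in E} z_r - c'| < b\delta)$ for some adjusted deterministic target $c'$. List $E = \{r_1 < \ldots < r_k\}$, so $\|v^{(r_i)}\|_2$ is decreasing in $i$. Combining the scale hypothesis $\|v^{(s)}\|_2 \ge C_1 \|v^{(s+1)}\|_2$ with the bounds from $B_r$ gives $|z_{r_{i+1}}| \le (C_0^2/C_1) |z_{r_i}| = |z_{r_i}|/4$, matching the computation in the passage preceding the claim. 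Writing $\epsilon_i = \mathrm{sign}(z_{r_i})$ and revealing the signs in order $i = 1, 2, \ldots, k$, the geometric decay forces the tail $|\sum_{j \ge i} z_{r_j}|$ to be at most $3C_0\|v^{(r_i)}\|_2$; hence once $\epsilon_1,\ldots,\epsilon_{i-1}$ and the magnitudes $|z_{r_1}|,\ldots,|z_{r_{i-1}}|$ are fixed, at most one sign $\epsilon_i$ is compatible with the target interval of length $2b\delta$, provided $b\delta < \tfrac{1}{2}|z_{r_i}|$. Since $|z_{r_i}| \ge \|v^{(r_i)}\|_2/C_0 \ge C_1^{k-i}\delta/C_0$, this compatibility condition can fail only for the last $O(\log b)$ values of $i$, so the number of admissible sign patterns is at most $b^{O(1)}$.

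The remaining step is to show that, conditional on any admissible sign pattern and on $\bigcap_{r \in E} B_r$, the probability that the independent quantities $z_{r_i}$ land in the narrow target window is at most $\rho^k$ for some absolute $\rho < 1$. Working inductively from $i = 1$ downward, at step $i$ the window for $z_{r_i}$ is an interval of length comparable to $\|v^{(r_{i+1})}\|_2 \le \|v^{(r_i)}\|_2/C_1$, which is far smaller than the range $[\|v^{(r_i)}\|_2/C_0,\,C_0\|v^{(r_i)}\|_2]$ guaranteed by $B_{r_i}$; a Littlewood--Offord-style bound inside the block $P_{r_i}$ (Lemma~\ref{anti-concentration:sparse-vectors}), or a second application of Claim~\ref{clm:positiveProb}, then yields a conditional probability bounded by some universal $\rho < 1$. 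Multiplying across the $k$ good scales and combining with the $b^{O(1)}$ sign-pattern count and the Chernoff tail delivers the bound $\exp(-\Omega(S) + O(\log b))$, which is absorbed into the stated form for a suitable $C_2$. The main obstacle is precisely this last step: each $|z_{r_i}|$ itself ranges over a factor of $C_0^2$, so care is needed both to set up the conditioning so that the per-scale anti-concentration estimates remain independent, and to ensure the cumulative factor is genuinely of the form $\rho^k$ rather than something like $(1 - \Theta(1/b))^k$, which would destroy the polynomial-in-$b$ dependence.
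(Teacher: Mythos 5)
Your first three paragraphs follow the paper's proof closely: defining $z_r$, introducing the good events $B_r$, applying Claim~\ref{clm:positiveProb} and Chernoff to get $|E| = \Omega(S)$, and using the geometric decay $|z_{r_{i+1}}| \le |z_{r_i}|/4$ to show that at most $b^{O(1)}$ sign patterns $(\eps_i)_i$ are compatible with landing in a $b\delta$-window (the paper gets exactly one by further restricting $E$ to the set $R = \{r \in E : |z_r| > 3b\delta\}$, which drops only the last $O(\log b)$ scales). That part is sound, modulo a few constants.

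The gap is your fourth paragraph, and you have correctly diagnosed it yourself as a genuine obstacle --- but it is an obstacle of your own creation, not one in the actual proof. Once you have bounded the number of admissible sign patterns, you are done; there is no ``remaining step'' where you must show the magnitudes $|z_{r_i}|$ hit a narrow window. The point is that each $z_r$ is a \emph{symmetric} random variable (the map $x_j \mapsto 1-x_j$ on the block $P_r$ sends $z_r$ to $-z_r$ and is measure preserving), so its sign is a fair coin, independent of $|z_r|$. Therefore, conditioning on the entire vector of magnitudes $(|z_r|)_{r\in[S]}$ --- and hence on $E$, on $\bigcap_{r} B_r$, and on everything else you have revealed --- the signs $\eps_r = \mathrm{sign}(z_r)$ for $r\in E$ remain i.i.d.\ uniform on $\{-1,1\}$. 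The conditional probability is then exactly $(\text{\# admissible sign patterns})\cdot 2^{-|E|} \le b^{O(1)} 2^{-\Omega(S)}$, and no per-scale anti-concentration (Littlewood--Offord inside a block, a second use of Claim~\ref{clm:positiveProb}, etc.) is needed. Indeed the per-scale route cannot work: $v^{(r)}$ could have a single nonzero coordinate, in which case Lemma~\ref{anti-concentration:sparse-vectors} gives $\rho = 1$. The paper makes this explicit by drawing fresh coins $\eps \in \{-1,1\}^S$, noting $\eps_r z_r \stackrel{d}{=} z_r$, revealing $x$ (hence all $z_r$ and $R$) and $\eps_r$ for $r \notin R$, and then applying the sign-count argument (Claim~\ref{claim:boundedprob}) to the remaining uniform $\eps_r$, $r\in R$. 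Replace your final paragraph with this symmetry observation and the argument closes.
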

	
	\begin{proof}
		Let $[n] = P_1\cup \ldots \cup P_S$ be the partition of $[n]$ associated to the scales of $v$. 
		For $r \in [S]$, define
		\[z_r = \ip{x^{(r)}}{v^{(r)}} - \frac{1}{2}\sum\limits_{i \in P_r} v_i, \]
		where $x^{(r)}=x^{P_r}$.
		Let $E$ be the set of indices $r \in [S]$ for which the event
		\begin{align}\label{eq:weakconcentration}
			\frac{\|v^{(r)}\|_2}{C_0} \leq |z_r| \leq C_0 \|v^{(r)}\|_2
		\end{align}
		occurs.
		We claim that  $(|z_r|)_{r \in  E}$ decreases exponentially and that $|z_{r}| > 3b\delta$ whenever $r \in E$ and $r < S-\log(3b)$.
		In fact, note that if $r \in E$ and $s> r$ ($s$ might not be in $E$), then
		\begin{equation} \label{eq::zr>4zs}
			|z_{r}| \geq \frac{\|v^{(r)}\|_2}{C_0}
			\geq 4 C_0 \|v^{(s)}\|_2,
		\end{equation}
		where the second inequality follows from the $S$ scales property.
		This implies that
		\[|z_r| \ge 4|z_s|
		\qquad \text{and} \qquad
		|z_{r}| \geq 4C_0 \cdot (4C_0^2)^{S-(r+1)}\cdot \delta \]
		for all $r, s \in E$ such that $r < s$.
		The first inequality follows from~\eqref{eq:weakconcentration} combined with~\eqref{eq::zr>4zs}.
		The second inequality follows from~\eqref{eq::zr>4zs} combined with $\|v^{(r+1)}\|_2 \ge (4C_0^2)^{S-(r+1)}\cdot \delta$, by the $S$ scales property.
		In particular, from the last inequality it follows that $|z_{r}| > 3b\delta$, whenever $r \in E$ and $r < S-\log(3b)$.
		
		Let $R$ be the set of indices $r \in E$ so that $|z_r| > 3 b \delta$.
		Now we show that $R$ is large with high probability.
		Observe that the indicators $(1_{\{r \in E\}})_{r \in [S]}$ are independent from each other.
		By Claim~\ref{clm:positiveProb}, we have $\Pr{r \in E} \ge C_0^{-1}$, and
		by Chernoff's inequality\footnote{Chernoff's inequality states that $\Pr{X\le \E(X)/2} \le e^{-\E(X)/8}$, where $X$ is a sum of independent Bernoulli random variables.} we have
		\[\Pr{|E|\le \dfrac{S}{2C_0}} \le e^{-S/(8C_0)}.\]
		As $|z_{r}| > 3b\delta$ whenever $r < S-\log(3b)$, we have
		\begin{align}\label{Rislarge}
			\Pr{|R| \ge \dfrac{S}{2C_0} - \log(3b)} \ge 1-e^{-S/(8C_0)}.
		\end{align}
		
		Let $\eps \in \{-1,1\}^S$ be uniformly chosen, independently of $x \in \{0,1\}^n$.
		Observe that the variables $x_iv_i-v_i/2$ and $\eps_r(x_iv_i-v_i/2)$ 
		are uniformly distributed in $\{- v_i/2,v_i/2\}$, for $i \in [n]$ and $r \in [S]$.
		Moreover, $z_r$ has the same distribution as $\eps_r z_r$, for $r \in [S]$.
		Thus, it suffices to bound the probability that
		$|\eps_1z_1+\ldots+\eps_Sz_S-a|<b\delta$.
		To do so, we first reveal $x \in \{0,1\}^n$, and hence $R$. Then, we reveal $\eps_r$ for $r \notin R$.
		Conditioning on these variables, it suffices to bound the probability that
		\begin{align}\label{eqn:Revent}
			\left |\sum \limits_{r \in R} \eps_r z_r -c \right| <b\delta
		\end{align} 
		for all $c\in \mathbb{R}$.
		
		Fix some $c \in \mathbb{R}$. We claim the following.
		
		\begin{claim} \label{claim:boundedprob}
			The probability that~\eqref{eqn:Revent} occurs is bounded by $2^{-|R|}$, that is,
			there is at most one choice for $\eps \in \{-1,+1\}^R$ so that~\eqref{eqn:Revent} holds. 
		\end{claim}
		
		Observe that this claim together with~\eqref{Rislarge} would imply that
		\[ \P_{x \sim \{0, 1\}^n}\big (|\ip{x}{v} - a| < b \delta\big) < \exp \left (- \dfrac{S}{8C_0} \right) + \exp \left (- \dfrac{S}{2C_0} + \log(3b) \right), \]
		which proves the lemma.
		Thus, it suffices to prove Claim~\ref{claim:boundedprob}.
		
		\textit{Proof of Claim~\ref{claim:boundedprob}.} Suppose for contradiction that there exist $\eps \neq \eps'$, both satisfying~\eqref{eqn:Revent}. Let $r_0$ be the minimum index where $\eps_{r_0}\neq \eps'_{r_0}$, so $|\eps_{r_0}-\eps'_{r_0}|=2$. Then,
		\begin{align}\label{ineq:manyscales-1}
			2 b \delta > \Big| \sum_{r \in R} (\eps_r-\eps'_r) z_r  \Big| \ge  2|z_{r_0}| -  \Big | \sum_{r \in R: r > r_0} (\eps_r-\eps'_r) z_r  \Big|. 
		\end{align}
		However, since $|z_r|\ge 4|z_s|$ for every $r,s\in R$ with $r<s$, we have 
		\begin{align}\label{ineq:manyscales-2}
			\Big| \sum_{r \in R: r > r_0} (\eps_r-\eps'_r) z_r \Big| \le 2|z_{r_0}|\cdot \sum_{i \ge 1} 4^{-i} = \frac{2|z_{r_0}|}{3}.
		\end{align}
		By~\eqref{ineq:manyscales-1} and~\eqref{ineq:manyscales-2}, we conclude that 
		\[	2 b \delta > \Big| \sum_{r \in R} (\eps_r-\eps'_r) z_r  \Big| \ge  \frac{4|z_{r_0}|}{3} > 4b\delta,\]
		a contradiction.
		The last inequality follows from the fact $r_0 \in R$.
	\end{proof}
	
	\section{A lemma from convex geometry} \label{sec:bang}
	
	In this section, we introduce Bang's Lemma, which is the main tool of the proof.
	This lemma was obtained by Bang~\cite{bang1951solution} in his proof of the symmetric case of Tarski's plank problem, as observed by Ball~\cite{ball1991plank}.
	Roughly speaking, it states that for every $k \times k$ symmetric matrix $M$ with positive diagonal entries and a vector $\zeta \in \mathbb{R}^k$, there exists a vector $\eps \in \{\pm 1\}^k$ for which $(M\eps)_t$ is far from $\zeta_t$, for all $t \in [k]$.
	That is, $\eps$ is far from the hyperplanes $\ip{M_i}{x}=\zeta_i$, where $M_i$ denotes the $i$-th row of $M$.
	
	\begin{lemma}[Bang~\cite{ball1991plank,bang1951solution}] \label{lem:bang}
		Let $M$ be a $k \times k$ symmetric matrix such that $M_{tt}\ge 0$ for every $t \in [k]$.
		For $\zeta=(\zeta_1, \ldots, \zeta_k) \in \R^k$ and $\theta=(\theta_1, \ldots, \theta_k) \in \R_{\ge 0}^k$, there exists $\eps \in \{\pm 1\}^k$ such that
		\begin{align} \label{eq:bang}
			\big| \big(M(\theta \eps)\big)_t -\zeta_t \big| \ge M_{tt} \theta_t 
		\end{align}
		for every $t \in [k]$, where $\theta \eps:=( \theta_1 \eps_1,\ldots, \theta_k \eps_k)$.
	\end{lemma}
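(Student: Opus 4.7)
The plan is to prove \eqref{eq:bang} by an extremal argument, choosing $\eps$ to maximize a suitable quadratic form. Concretely, I would define $F : \{\pm 1\}^k \to \R$ by
\[
F(\eps) = (\theta\eps)^{T} M (\theta\eps) - 2\,\zeta^{T}(\theta\eps) = \sum_{i,j} M_{ij}\theta_i\theta_j\eps_i\eps_j - 2\sum_i \zeta_i\theta_i\eps_i,
\]
and let $\eps \in \{\pm 1\}^k$ be any maximizer of $F$. The idea is that local optimality of $\eps$ under single-coordinate flips will encode precisely the desired inequality at each index $t$.

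To make this precise, for each $t \in [k]$ let $\eps^{(t)}$ denote the sign vector obtained from $\eps$ by flipping its $t$-th coordinate. A direct computation, using the symmetry of $M$ and the fact that $\eps_i^2 = 1$, gives
\[
F(\eps) - F(\eps^{(t)}) = 4\,\theta_t\,\eps_t\!\left(\sum_{j \ne t} M_{tj}\theta_j\eps_j - \zeta_t\right).
\]
The key observation is that the diagonal contribution (from $i = j = t$) cancels because $(-\eps_t)^2 = \eps_t^2$, which is why only the off-diagonal sum and the linear term survive on the right-hand side; symmetry combines the $i = t, j \ne t$ and $j = t, i \ne t$ terms into a single factor of $2$ that then doubles to $4$ after accounting for the sign flip.

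By maximality, $F(\eps) - F(\eps^{(t)}) \ge 0$ for every $t$. When $\theta_t > 0$, this yields $\eps_t\bigl(\sum_{j \ne t} M_{tj}\theta_j\eps_j - \zeta_t\bigr) \ge 0$. Multiplying the identity
\[
(M(\theta\eps))_t - \zeta_t = M_{tt}\theta_t\eps_t + \sum_{j \ne t} M_{tj}\theta_j\eps_j - \zeta_t
\]
by $\eps_t$ and using $\eps_t^2 = 1$ then gives $\eps_t\bigl[(M(\theta\eps))_t - \zeta_t\bigr] \ge M_{tt}\theta_t$, which (using $M_{tt} \ge 0$ and $\theta_t \ge 0$) is exactly \eqref{eq:bang}. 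When $\theta_t = 0$ the right-hand side of \eqref{eq:bang} vanishes and the inequality is trivial.

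The argument reduces to one careful computation, so I do not expect a serious obstacle; the only place to be cautious is the bookkeeping in $F(\eps) - F(\eps^{(t)})$, specifically verifying that the diagonal contribution vanishes and that the off-diagonal cross terms combine correctly through $M_{tj} = M_{jt}$.
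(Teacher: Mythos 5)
Your proof is correct and takes essentially the same approach as the paper: define the quadratic form $F(\eps) = \ip{M(\theta\eps)}{\theta\eps} - 2\ip{\theta\eps}{\zeta}$, take a maximizer over $\{\pm 1\}^k$, and extract \eqref{eq:bang} from the single-coordinate flip inequality. The bookkeeping in $F(\eps) - F(\eps^{(t)})$ matches the paper's \eqref{eq:lem12}.
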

	
	\begin{proof}
		Let $\eps \in \{\pm 1 \}^k$ be a vector that maximizes the expression
		$$ \ip{M (\theta \eps)}{\theta \eps} - 2 \ip{\theta \eps}{\zeta} = \sum_{i,j=1}^k M_{ij}\theta_i \theta_j \eps_i \eps_j - 2 \sum_{i=1}^k \theta_i \eps_i \zeta_i.  $$
		Let $t \in [k]$ and $\eps^{(t)} \in \{\pm 1\}^k$ be the vector which differs from $\eps$ only in the $t$-th coordinate. By the maximality of $\eps$, we have that
		$$ (\eps_t - \eps^{(t)}_t) \left[ \sum_{i \neq t} M_{it} \theta_i \theta_t \eps_i + \sum_{i \neq t} M_{ti} \theta_i \theta_t \eps_i- 2 \theta_t \zeta_t \right] \ge 0.$$
		This is equivalent to
		\begin{equation} \label{eq:lem12}
			4\theta_t \eps_t \left[ \sum_{i \neq t} M_{it} \theta_i\eps_i - \zeta_t \right] =
			4\theta_t  \eps_t \left[ \sum_{i=1}^k M_{it} \theta_i \eps_i - \zeta_t - M_{tt} \theta_t \eps_t \right] \ge 0 .
		\end{equation}
		Simplifying the left-hand side of \eqref{eq:bang}, we conclude that
		$$ \big| \big(M(\theta \eps) \big)_t -\zeta_t \big| = \left| \sum \limits_{i=1}^k M_{it} \theta_i \eps_i - \zeta_t  \right | \ge \eps_t \left( \sum_{i=1}^k M_{it} \theta_i \eps_i - \zeta_t \right)
		\ \stackrel{\mathclap{\eqref{eq:lem12}}}{\ge} \ 
		M_{tt} \theta_t \eps_t^2 = M_{tt}\theta_t ,$$	
		for every $t \in [k]$.
	\end{proof}
	
	Lemma~\ref{lem:bang} above tells us that we can find a vector from $\{\pm 1\}^k$ that is far from each of the hyperplanes $\left\{ x \in \R^k : \sum_{j=1}^k M_{ij}\theta_j x_j = \zeta_i \right\}$ for $i \in [k]$. We will apply Lemma~\ref{lem:bang} to find a vector $y \in [0,1]^n$ that is far away from each of the hyperplanes $\ip{v_i}{x}=\mu_i$, for $i \in [k]$. Note that this vector $y$ is not necessarily a vertex of the cube $\{0,1\}^n$. We define a probability distribution over the $n$-cube $\{0,1\}^n$ based on the vector $y$ which will make it unlikely for a randomly sampled vector to lie in any of the hyperplanes. We use the probabilistic method to conclude, similar to Lemma~\ref{anti-concentration:sparse-vectors}, that a collection of vectors with a certain structure cannot cover many vertices. This intuition is formalized in Proposition~\ref{prop:bang} below. For its proof, we make use of the following version of Hoeffding's inequality (Lemma~\ref{lem:hoeffding}). We highlight that this stronger inequality allows us to improve the logarithmic term on the number of hyperplanes in an essential cover. The slightly weaker lower bound $ \Omega\left(\frac{n^{5/9}}{(\log n)^{2/3}}\right)$ could be obtained by applying Bernstein's inequality~\cite{bernstein1924modification} instead. 
	
	\begin{lemma}[Hoeffding's inequality~\cite{hoeffding1963probability}] \label{lem:hoeffding}
		Let $z_1, \ldots, z_\ell$ be independent zero mean random variables such that, for every $j \in [\ell]$, $a_j \le z_j \le b_j$ almost surely. Then, for every $t>0$ we have 
		$$ \P \left(\sum_j z_j \ge t\right) \le \exp \left( - \frac{2t^2}{\sum_j (b_j-a_j)^2} \right). $$
	\end{lemma}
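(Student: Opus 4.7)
The plan is to follow the classical Chernoff--Cram\'er exponential moment method. The idea is to turn the tail probability into an expectation of an exponential, factor it via independence, control each factor through a moment generating function estimate, and finally optimize over the free parameter.

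First I would fix $\lambda > 0$ and apply Markov's inequality to the nonnegative random variable $e^{\lambda \sum_j z_j}$:
\[
\P\!\left(\sum_{j=1}^\ell z_j \ge t\right) = \P\!\left(e^{\lambda \sum_j z_j} \ge e^{\lambda t}\right) \le e^{-\lambda t}\, \E\!\left[e^{\lambda \sum_j z_j}\right].
\]
By independence of the $z_j$, the expectation on the right factors as $\prod_{j=1}^\ell \E[e^{\lambda z_j}]$, so the whole problem reduces to bounding each individual moment generating function $\E[e^{\lambda z_j}]$.

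The heart of the argument is then the single-variable estimate (Hoeffding's lemma): if $Z$ has mean $0$ and $a \le Z \le b$ almost surely, then $\E[e^{\lambda Z}] \le \exp\bigl(\lambda^2 (b-a)^2 / 8\bigr)$. To prove this I would use convexity of $x \mapsto e^{\lambda x}$ on $[a,b]$ to write
\[
e^{\lambda x} \le \frac{b-x}{b-a}\, e^{\lambda a} + \frac{x-a}{b-a}\, e^{\lambda b} \qquad \text{for } x \in [a,b],
\]
take expectations, and use $\E[Z]=0$ to obtain $\E[e^{\lambda Z}] \le \frac{b}{b-a}e^{\lambda a} - \frac{a}{b-a}e^{\lambda b}$. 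Setting $p = -a/(b-a)$ and $u = \lambda(b-a)$, the right-hand side becomes $e^{\varphi(u)}$ with $\varphi(u) = -pu + \log(1-p+pe^u)$. A direct calculation gives $\varphi(0)=\varphi'(0)=0$ and $\varphi''(u) \le 1/4$ for all $u$, so Taylor's theorem yields $\varphi(u) \le u^2/8$, which is exactly the desired bound.

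Combining these pieces gives
\[
\P\!\left(\sum_{j=1}^\ell z_j \ge t\right) \le \exp\!\left(-\lambda t + \frac{\lambda^2}{8} \sum_{j=1}^\ell (b_j - a_j)^2\right),
\]
and optimizing in $\lambda$ by taking $\lambda = 4t / \sum_j (b_j-a_j)^2$ produces the stated exponent $-2t^2 / \sum_j (b_j-a_j)^2$. I expect the main technical obstacle to be the second-derivative bound $\varphi''(u) \le 1/4$ in the proof of Hoeffding's lemma; this is an elementary calculus fact (it reduces to $q(1-q) \le 1/4$ for $q = pe^u/(1-p+pe^u) \in [0,1]$), but it is the one nontrivial inequality driving the sharp constant $2$ in the final bound.
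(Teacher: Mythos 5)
The paper does not prove Lemma~\ref{lem:hoeffding}; it is quoted from Hoeffding's 1963 paper. Your proposal reproduces the classical argument from that reference (Markov on the exponential moment, factorization by independence, the convexity estimate $\E[e^{\lambda Z}] \le e^{\lambda^2(b-a)^2/8}$ via the second-derivative bound $\varphi''\le 1/4$, and optimization in $\lambda$), and it is correct; there is nothing in the paper to compare it against.
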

	
	Let $\mu \in \R^{\ell}$ and $V$ be an $\ell \times m$ matrix whose rows have $\ell_2$-norm 1.
	Roughly speaking, the proposition says that if the $\ell_2$-norm and the support of each column of $V$ are small, then the rows of $Vx = \mu$ do not cover the entire $m$-cube $\{0,1\}^m$.
	The idea of the proof of Theorem~\ref{thm:main} is to apply Proposition~\ref{prop:bang}.
	For every essential matrix, we find a large submatrix whose norm and support of each column are small.
	We then apply Proposition~\ref{prop:bang} to such submatrix and show that there must be a vertex of the $n$-cube which is not covered.
	Below, $v_{* j}$ denotes the vector corresponding to the $j$-th column of the matrix $V$.

	\begin{prop} \label{prop:bang}
		Let $V$ be an $\ell \times m$ matrix such that $\|v_i\|_2 = 1$ for all $i \in [\ell]$, where $v_i$ is the $i$-th row of $V$. 
		Suppose that $\alpha = \max\limits_{j \in [m]} |\supp(v_{*j})|$ and 
		$\beta = \max\limits_{j \in [m]} \| v_{*j}\|_2^2$ satisfy
		\[2\alpha \beta \log (4\ell)\le 1.\]
		Then, for every $\mu \in \R^{\ell}$, the hyperplanes given by the rows of the system $Vx = \mu$ do not cover the entire $m$-cube $\{0,1\}^m$.
	\end{prop}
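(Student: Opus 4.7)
My plan is to apply Bang's Lemma (Lemma~\ref{lem:bang}) to the symmetric matrix $M := VV^{T}$ to produce a point $y \in [0,1]^m$ that is quantitatively far from every hyperplane $\ip{v_i}{x} = \mu_i$, and then use $y$ as the mean of a product Bernoulli distribution on $\{0,1\}^m$. Hoeffding's inequality (Lemma~\ref{lem:hoeffding}) and a union bound over the $\ell$ rows will then show that with positive probability the sampled vertex lies on none of the hyperplanes.

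For the Bang step, I would set $\theta := 1/\sqrt{\alpha\beta}$, take $\theta_i := \theta$ for every $i \in [\ell]$, and define $\zeta_i := 2\mu_i - \sum_{j=1}^m v_{ij}$. Since $M_{ii} = \|v_i\|_2^2 = 1$, Lemma~\ref{lem:bang} yields a sign vector $\eps \in \{\pm 1\}^{\ell}$ with $|(M(\theta\eps))_i - \zeta_i|\ge \theta$ for every $i$. Set $y := \frac{1}{2}\mathbf{1} + \frac{1}{2}V^{T}(\theta\eps)$, where $\mathbf{1}\in \R^m$ is the all-ones vector. The identity $\ip{v_i}{V^{T}(\theta\eps)} = (M(\theta\eps))_i$ translates Bang's conclusion into $|\ip{v_i}{y} - \mu_i| \ge \theta/2$ for every $i$. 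To see that $y\in[0,1]^m$, I would bound each coordinate by Cauchy--Schwarz:
\[|(V^{T}(\theta\eps))_j| \le \theta \sum_{i \in \supp(v_{*j})} |v_{ij}| \le \theta\sqrt{|\supp(v_{*j})|}\cdot \|v_{*j}\|_2 \le \theta\sqrt{\alpha\beta} = 1.\]

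For the probabilistic step, sample $x \in \{0,1\}^m$ with independent coordinates $x_j \sim \textup{Bernoulli}(y_j)$, so that $\Ex(x) = y$. Fix a row $i$. The variables $v_{ij}(x_j - y_j)$ have zero mean and lie in intervals of length $|v_{ij}|$, and $\sum_j v_{ij}^2 = \|v_i\|_2^2 = 1$, so Hoeffding's inequality gives
\[\Pr{\ip{v_i}{x} = \mu_i}\le \Pr{|\ip{v_i}{x-y}| \ge \theta/2}\le 2\exp(-\theta^2/2).\]
Substituting $\theta^2 = 1/(\alpha\beta) \ge 2\log(4\ell)$, this is at most $1/(2\ell)$. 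A union bound over $i \in [\ell]$ gives a total probability of at most $1/2$ of landing on some hyperplane, so some $x \in \{0,1\}^m$ is uncovered.

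The main obstacle is balancing the two conflicting demands on the parameter $\theta$. The separation guaranteed by Bang's Lemma is $M_{ii}\theta = \theta$, so one wants $\theta$ as large as possible; but the constraint $y \in [0,1]^m$ caps it at $1/\sqrt{\alpha\beta}$ through the Cauchy--Schwarz column bound. The hypothesis $2\alpha\beta\log(4\ell) \le 1$ is precisely the inequality that allows the exponential bound $2\exp(-\theta^2/2)$ to beat the linear factor $\ell$ in the union bound when $\theta = 1/\sqrt{\alpha\beta}$, and it is the use of the sharp Hoeffding tail (rather than a Bernstein-type bound) that keeps the logarithmic factor in the hypothesis down to a single $\log(4\ell)$.
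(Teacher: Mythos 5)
Your proposal is correct and follows essentially the same route as the paper: Bang's Lemma applied to $VV^T$ with a constant vector $\theta$, a Cauchy--Schwarz bound on the columns to verify $y\in[0,1]^m$, a product Bernoulli rounding with mean $y$, and Hoeffding plus a union bound. The only cosmetic difference is that the paper fixes $\theta=\sqrt{2\log(4\ell)}$ and invokes the hypothesis to verify $\theta\sqrt{\alpha\beta}\le 1$, whereas you take $\theta=1/\sqrt{\alpha\beta}$ and invoke it to verify $\theta^2\ge 2\log(4\ell)$; these are the same argument.
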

	
	\begin{proof}
		Let $\theta, \zeta_1,\ldots,\zeta_{\ell} \in \R$ be given by
		$\theta \coloneqq (2\log(4\ell))^{1/2}$ and 
		$\zeta := 2\mu-V\cdot\bar{1}$, where $\bar{1}$ denotes the vector with all coordinates equal to 1. 
		By Bang's Lemma (c.f.~Lemma~\ref{lem:bang}) for $M := VV^T$ and $\theta_i = \theta$ for $i\in [\ell]$, there exists a vector $\eps \in \{\pm 1\}^{\ell}$ such that 
		\begin{align}\label{eq:bangforVVT}
			| (VV^T\theta\eps)_i - \zeta_i | \ge \theta
		\end{align}
		for every $i \in [\ell]$.
		In particular, the vector $y' := \theta V^T \eps$ satisfies $| \ip{v_i}{y'} - \zeta_i| \ge \theta$ for every $i \in [\ell]$. 
		By the Cauchy--Schwartz inequality, we have
		$$\| y' \|_{\infty} = \| \theta V^T \eps \|_{\infty} = \max_{j \in [n]} \left| \theta \sum_{i=1}^{\ell} v_{ij} \eps_i \right| \le \theta\sqrt{\alpha \beta}\le 1.$$
		
		We would like to find a vector of the $m$-cube (i.e. in $\{0,1\}^m$) that satisfies no row from the system of equations $Vx = \mu$.
		First, we use $y'$ to find a vector $y \in [0,1]^m$ which is far from the hyperplanes $\langle v_i, x \rangle = \mu_i$, for $i \in [\ell]$.
		For this, we set $y:=\frac{y'+\bar{1}}{2}$. 
		As $\|y'\|_{\infty}\le 1$, note that $y \in [0,1]^m$.
		By~\eqref{eq:bangforVVT}, we have
		\begin{align}\label{eq:biggerthantheta}
			| (V(2y-\bar{1}))_i - \zeta_i | = | 2(Vy)_i - 2\mu_i | \ge \theta.
		\end{align}

		Define the random vector $w:= y+\delta$, where $\delta \in \mathbb{R}^m$ is a random vector with independent entries and distribution given by
		\begin{align*} 
			\Pr{\delta_i =1-y_i} = y_i \qquad \text{and} \qquad \Pr{\delta_i=-y_i} = 1-y_i.
		\end{align*} 
		Observe that the random variables $\delta_i$ are chosen so that we have $w \in \{0,1\}^m$ and $\Ex(\delta)=0$.
		To finish the proof of Proposition~\ref{prop:bang}, it suffices to show that $\sum_{i=1}^{\ell}\P(\ip{w}{v_i}=\mu_i) < 1$.
		To bound each of the probabilities $\P(\ip{w}{v_i}=\mu_i)$, we first note that
		\begin{align}\label{eq:reducingtohoeffding}
			\P(\ip{w}{v_i}=\mu_i) \le \P\big(| \ip{\delta}{v_i} | = | \mu_i - \ip{y}{v_i} |\big) \le \P\left( |\ip{\delta}{v_i}| \ge \frac{\theta}{2} \right),
		\end{align}
		where we used~\eqref{eq:biggerthantheta} for the last inequality.
		
		The variables $(\delta_j v_{ij})_{j\in [m]}$ are independent, bounded by $-v_{ij} \le \delta_j v_{ij} \le v_{ij}$ and centered (i.e.~$\E[\delta_j v_{ij}]=0$) for every $j \in [\ell]$.
		By~\eqref{eq:reducingtohoeffding} and Hoeffding's inequality~(c.f.~Lemma~\ref{lem:hoeffding}), we have 
		$$ \P(\ip{w}{v_i} = \mu_i) \le \P\left( |\ip{\delta}{v_i}| \ge \frac{\theta}{2} \right) 
		\le 2 \cdot \exp \left( -\dfrac{2(\theta/2)^2}{\sum_{j =1}^{m} v_{ij}^2} \right) 
		= 2\exp \left( \frac{-\theta^2}{2} \right). $$ 
		As $\theta = (2\log(4\ell))^{1/2}$, we obtain
		\[\sum \limits_{i=1}^{\ell}\P\big(\ip{w}{v_i}=\mu_i\big)\le 
		2 \ell \exp \left( \frac{-\theta^2}{2} \right) \le \frac{1}{2}. \qedhere \]
	\end{proof}
	
	Proposition~\ref{prop:bang} above is the main difference in the proof of Theorem~\ref{thm:main} compared to the proof from~\cite{yehuda2021lower}. Roughly speaking, the better dependence on parameters of the condition $2\alpha \beta \log (4\ell) \le 1$ allows us to substantially improve the lower bound on the number of hyperplanes in any essential cover of the cube.
	
	\section{The structure of essential covers} \label{sec:structure}
	
	In this section, we prove structural statements for essential matrices.
	The goal is to decompose an essential matrix in such a way that we find a suitable submatrix where we can apply Proposition~\ref{prop:bang}.
	The decomposition will be done in two steps.
	In the first decomposition, we divide the essential matrix into four blocks, see Subsection~\ref{firstdecomposition}.
	In one of the blocks, we will have small column norm and row norm either $0$ or $1$.
	As every row needs to have norm 1 to apply Proposition~\ref{prop:bang}, we decompose this block even further to deal with the rows of norm $0$.
	This is the second decomposition (see Subsection~\ref{seconddecomposition}), which is the main result of this section.
	
	The proofs of the decomposition lemmas are done algorithmically and have the same flavor as the proof for the lower bound of $n^{1/3}$ given by Linial and Radhakrishnan~\cite{LINIAL2005331}.
	For the reader to be familiarized with the high-level techniques used in the decompositions, we first state and prove this result.
	
	\begin{lemma}[Linial--Radhakrishnan~\cite{LINIAL2005331}.] \label{lem:n1/3}
		An essential cover of the $n$-cube has at least $n^{1/3}$ hyperplanes.
	\end{lemma}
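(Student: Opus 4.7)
The plan is to implement the heuristic outlined in the introduction. Suppose for contradiction that hyperplanes $\ip{v_i}{x} = \mu_i$, $i \in [k]$, form an essential cover of $\{0,1\}^n$ with $k < n^{1/3}$, and exhibit an $x \in \{0,1\}^n$ that escapes every hyperplane, contradicting (E1).

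First, by property (E2), every coordinate lies in some $\supp(v_i)$, so writing $s_i := |\supp(v_i)|$ we have $\sum_{i=1}^{k} s_i \ge n$. The Littlewood--Offord Lemma (Lemma~\ref{anti-concentration:sparse-vectors}) gives $\Pru{\ip{v_i}{x} = \mu_i} \le 1/\sqrt{s_i}$. Heuristically, when $s_i \ge n/k$ for every $i$, a union bound yields
\[
\Pru{x \text{ is covered}} \le \sum_{i=1}^{k} \frac{1}{\sqrt{s_i}} \le k \sqrt{k/n} = \sqrt{k^3/n} < 1,
\]
which contradicts (E1) as soon as $k < n^{1/3}$.

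The obstacle is that the uniform lower bound $s_i \ge n/k$ need not hold: a few small supports may inflate the union bound and spoil the clean heuristic. To finish rigorously, I would proceed algorithmically, in the flavor of the later decomposition lemmas. Partition the hyperplanes into \emph{small} ($s_i < n^{2/3}$) and \emph{large} ($s_i \ge n^{2/3}$). Since $k < n^{1/3}$, the small hyperplanes collectively involve at most $k \cdot n^{2/3} < n$ coordinates, so one can iteratively fix the variables in each small hyperplane's support to values forcing $\ip{v_i}{x} \ne \mu_i$ (possible by a counting argument on the sub-cube $\{0,1\}^{\supp(v_i)}$), thereby ``killing'' each small hyperplane and reducing to a non-trivial sub-cube on which only the large hyperplanes remain active. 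On this sub-cube, the Littlewood--Offord bound gives each remaining covering probability at most $n^{-1/3}$, and summing over at most $k < n^{1/3}$ large hyperplanes yields total covering probability strictly less than $1$, producing an uncovered vertex.

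The main obstacle is executing the peeling carefully: at each iteration one must verify that the hyperplane being killed is not degenerately forced on the remaining free coordinates (otherwise no fixing can avoid it), and that enough free variables survive the fixing so that the final Littlewood--Offord step on the large-support hyperplanes still applies. This bookkeeping is precisely the ``algorithmic flavor'' flagged in the paper, and it prefigures the two-step decomposition used later in Section~\ref{sec:structure}.
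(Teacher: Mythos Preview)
Your overall plan—separate small- and large-support hyperplanes, kill the small ones by fixing their coordinates, then union-bound the large ones via Littlewood--Offord—matches the paper's strategy in spirit, but there are two concrete gaps, and neither is just bookkeeping.

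First, the peeling step does not work as stated. You propose to fix the coordinates in $\supp(v_i)$ for each small hyperplane $i$ so as to force $\ip{v_i}{x}\ne\mu_i$. But once some small hyperplanes have been processed, the next one may have its entire support contained in already-fixed coordinates; then $\ip{v_i}{x}$ is determined and may well equal $\mu_i$. A greedy one-at-a-time scheme cannot avoid this, and doing it jointly amounts to finding a point of a sub-cube missed by a family of hyperplanes, which is the original problem in miniature. The paper sidesteps this entirely by invoking property~(E3): since the cover is essential, for any $i\in L_2$ there is a vertex covered only by hyperplane $i$, hence not covered by any hyperplane in $L_1$. Restricting that vertex to $M_2$ kills all of $L_1$ at once. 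This use of (E3) is the missing idea, not a detail of bookkeeping.

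Second, your static classification (small if $s_i<n^{2/3}$, large otherwise) does not guarantee what you need for the final step. After fixing the coordinates in $\bigcup_{\text{small }i}\supp(v_i)$, a ``large'' hyperplane may retain very little—possibly zero—support on the remaining free coordinates $M_1$, and the Littlewood--Offord bound on the sub-cube is $1/\sqrt{|\supp(v_i)\cap M_1|}$, not $1/\sqrt{s_i}$. The paper's algorithm is adaptive: a row is moved to $L_1$ whenever its \emph{remaining} support $|\supp(v_i)\setminus M_2|$ drops below $\ell$, and at that moment its whole support is absorbed into $M_2$. Thus every row left in $L_2$ is guaranteed to have support at least $\ell$ inside $M_1$, while each move adds fewer than $\ell$ new columns to $M_2$, so $|M_2|\le k\ell$ still holds.
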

	
	\begin{proof}
		Let $V$ be a $k \times n$ essential matrix.
		As usual, denote its rows by $v_1, \ldots, v_k$.
		For $\ell > 0$ (to be chosen later to be $n^{2/3}$), we run the following algorithm.
		
		\vspace{1.5mm} \noindent \textbf{Initiate} $L_1 \gets \emptyset$, $L_2 \gets [k]$, $M_1 \gets [n]$ and $M_2 \gets \emptyset$.
		
		\noindent \textbf{While} there is $i \in L_2$ such that $|\supp(v_i)\setminus M_2| < \ell$,
		
		$L_1 \gets L_1 \cup \{i\}$ and $L_2 \gets L_2 - \{i\}$;
		
		$M_2\gets M_2\cup \supp(v_i)$ and $M_1 \gets M_1 -  \supp(v_i)$;
		
		\noindent \textbf{Output:} $L_1$, $L_2$, $M_1$ and $M_2$.
		
		\noindent \textbf{end.}
		
		The algorithm outputs a partition of the rows 
		$L_1\cup L_2 = [k]$ and a partition of the columns $M_1\cup M_2 = [n]$ with the following properties.
		If $L_1, M_1 \neq \emptyset$, then the submatrix $V(L_1 \times M_1)$ is identically $0$. If $L_2 \neq \emptyset$, 
		then $M_1 \neq \emptyset$ and each row in $V(L_2 \times M_1)$ has support of size at least $\ell$.
		Moreover, by relabeling the rows in $L_1$ and the columns in $M_2$ if necessary, the vectors in $L_1$ have the following property.
		If $L_1\neq \emptyset$, then we have $M_2 \neq \emptyset$,
		$\supp(v_i) \se M_2$ and 
		$|\supp(v_{i})\setminus \bigcup_{j <i}\supp(v_{j})|< \ell$ for all $i \le |L_1|$.
		Below, Figure \ref{fig:n1/3} represents a partition given by the algorithm.
		
		\begin{figure}[ht!]
			\centering
			\includegraphics[width=12cm]{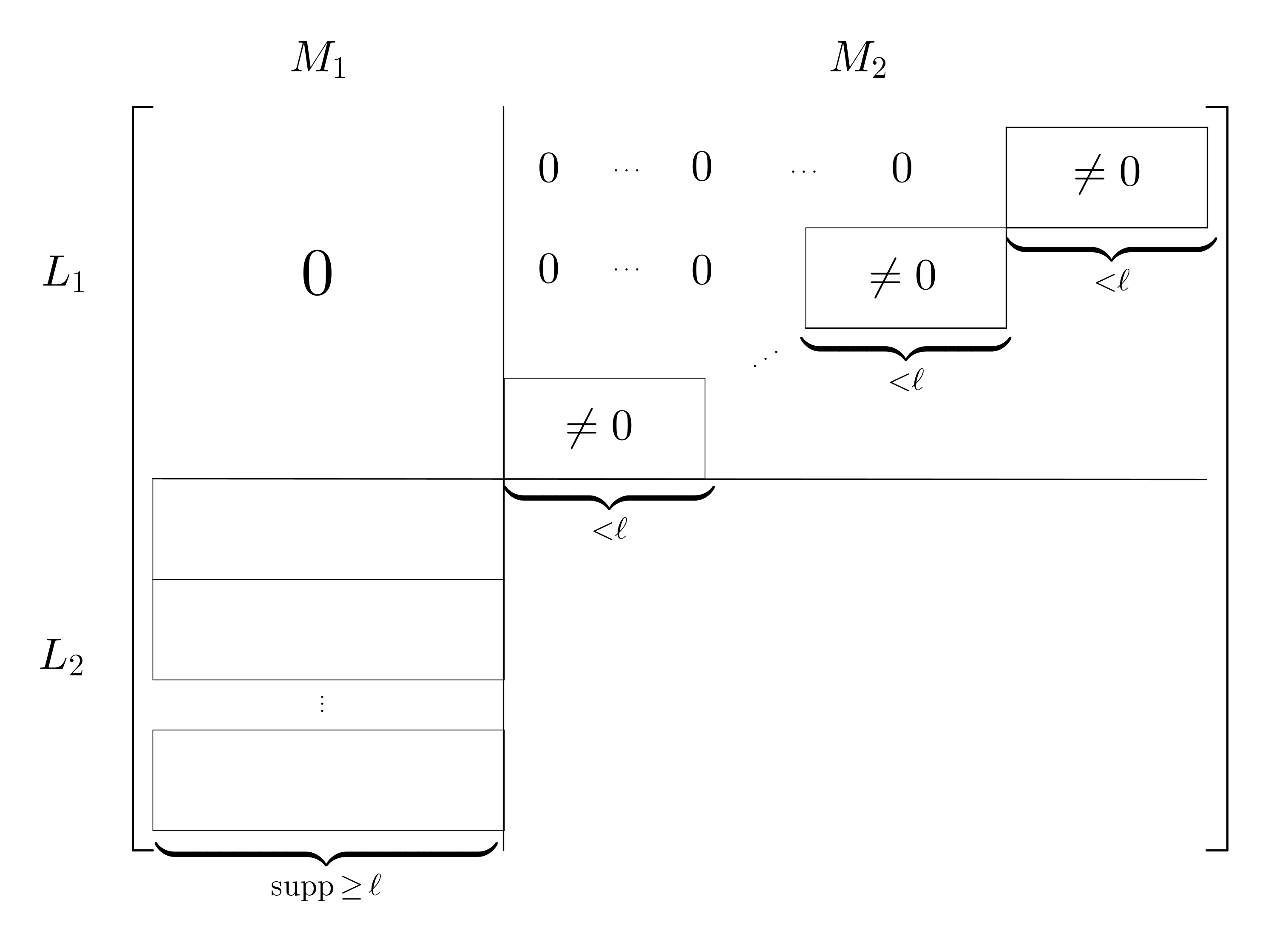}
			\captionsetup{justification=centering,margin=2cm}
			\caption{The decomposition of $V$ in the proof of Lemma \ref{lem:n1/3}.}
			\label{fig:n1/3}
		\end{figure}
		
		We are now ready to bound the size of the essential cover.
		If $L_1 = [k]$, then we have $M_2=[n]$. As $\bigcup_i \supp(v_i) = [n]$, this implies that $k \cdot \ell \ge n$.
		If $L_1 \neq [k]$, then $L_2\neq \emptyset$ and hence there exists $x\in \{0,1\}^n$ not covered by the hyperplanes in $L_1$. Let $P$ be the $|M_1|$-dimensional cube agreeing with $x$ on $M_2$. Fix the $M_2$-coordinates of $x$. Now, the hyperplanes corresponding to the rows in $L_2$ form a covering of $P$. By Lemma \ref{anti-concentration:sparse-vectors}, each hyperplane in $L_2$ covers at most $2^{|M_1|}/\sqrt{\ell}$ vertices of $P$, and hence $L_2$ needs to have size at least $\sqrt{\ell}$ for the $|M_1|$-cube to be covered.
		That is, we have $k \ge |L_2|\ge \sqrt{\ell}.$
		We conclude that
		$k \ge \min\left\{ \frac{n}{\ell} , \sqrt{\ell} \right\}$. Choosing $\ell = n^{2/3}$, we obtain $k \ge n^{1/3}$.
	\end{proof}
	
	Note that the main idea of the proof is to explore the facts that hyperplanes with big support cannot cover many vertices, and that only few hyperplanes can have relatively small support.
	We shall make use of these properties in the first and second decompositions (c.f.~Lemmas~\ref{lemma:decomposing} and~\ref{lemma:seconddecomposition}).
	The next lemma is a key ingredient for the proof of such decompositions.
	It bounds the support of every row in an essential matrix.
	
	\begin{lemma}[Linial--Radhakrishnan~\cite{LINIAL2005331}]\label{lemma:boundsupp} \label{lemma:nati} 
		Let $V$ be a $k \times n$ essential matrix. Then, we have
		$|\supp(v_i)| \le 2 k$ for all $i \in [k]$.
	\end{lemma}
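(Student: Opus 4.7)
My plan is to apply the Alon--Füredi covering theorem, which says that any family of affine hyperplanes in $\mathbb{R}^d$ that covers $\{0,1\}^d \setminus \{p\}$ while avoiding the vertex $p$ must have cardinality at least $d$; equivalently, any polynomial on $\{0,1\}^d$ of degree less than $d$ that vanishes on all but one vertex also vanishes at that vertex. Using (E3), I fix a witness $x \in \{0,1\}^n$ with $\ip{v_i}{x} = \mu_i$ and $\ip{v_j}{x} \neq \mu_j$ for every $j \neq i$, set $S = \supp(v_i)$ and $d = |S|$, and restrict attention to the $d$-dimensional sub-cube
\[
C = \{y \in \{0,1\}^n : y_l = x_l \text{ for every } l \notin S\},
\]
which I identify with $\{0,1\}^d$. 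Because $v_i$ is nonzero on every coordinate of $S$, the restriction $H_i \cap C$ is a proper hyperplane of $C$ passing through $x$, while every other $H_j$ restricts to a hyperplane of $C$ avoiding $x$ by (E3).

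Next I would consider the polynomial $P(y) = \prod_{j \neq i}(\ip{v_j}{y} - \mu_j)$ of degree $k-1$. By (E3), $P(x) \neq 0$, and by (E1), any $y \in C$ with $y \notin H_i$ lies on some $H_j$ with $j \neq i$, so $P(y) = 0$. Hence $P$ vanishes on $C$ outside the subset $U_C \subseteq H_i \cap C$ consisting of vertices of $C$ lying only on $H_i$. To invoke Alon--Füredi I would multiply $P$ by a product of linear forms $L_{y'}(z) = z_{j(y')} - y'_{j(y')}$, one for each $y' \in U_C \setminus \{x\}$, where $j(y') \in S$ is a coordinate where $y'$ and $x$ differ. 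The resulting polynomial $Q = P \cdot \prod_{y' \in U_C \setminus \{x\}} L_{y'}$ satisfies $Q(x) \neq 0$, vanishes on $C \setminus \{x\}$, and has degree at most $k - 1 + (|U_C| - 1)$. Alon--Füredi in the $d$-dimensional cube then gives $d \le k + |U_C| - 2$.

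The main obstacle is to bound $|U_C|$ above in terms of $k$; getting $|U_C| \le k+2$ would yield $d \le 2k$. This is where the essentiality hypotheses must be used in earnest: every vertex $y' \in U_C \setminus \{x\}$ is itself a candidate witness for $H_i$, and combining this with (E3) applied to the \emph{other} rows should constrain $U_C$. A clean route is to rerun the polynomial argument inside the proper hyperplane $H_i \cap C$, treating it as a $(d-1)$-dimensional affine cube: the $H_j \cap H_i \cap C$ ($j \neq i$) play the role of the cover, and one chases the same Alon--Füredi inequality one dimension down. The delicate bookkeeping is to track how many of the $L_{y'}$ factors are already forced by the restrictions $H_j \cap H_i \cap C$, and this is where the factor of $2$ (as opposed to $1$) inevitably enters the bound.
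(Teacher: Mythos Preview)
The paper does not supply its own proof of this lemma; it is quoted from Linial--Radhakrishnan and used as a black box, so there is nothing in the paper to compare your argument against. What remains is to assess whether your plan can be completed.

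Your setup is exactly right: the witness $x$ from (E3), the subcube $C$ on the coordinates $S=\supp(v_i)$, the polynomial $P(y)=\prod_{j\ne i}(\ip{v_j}{y}-\mu_j)$ of degree $k-1$, and the appeal to Alon--F\"uredi. The gap is the step where you dispose of $U_C\setminus\{x\}$ by multiplying in one linear form per point and then hope for $|U_C|\le k+2$. That inequality is false in general. Take the Linial--Radhakrishnan cover of $\{0,1\}^n$ (with $n$ even) by $H_0:\sum_l x_l=n/2$ together with $H_j:x_{2j-1}=x_{2j}$ for $j\in[n/2]$, and set $i=0$. Then $S=[n]$, $C=\{0,1\}^n$, and $U_C$ is the set of vertices with $y_{2j-1}\ne y_{2j}$ for every $j$, so $|U_C|=2^{n/2}$ while $k=n/2+1$. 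Thus killing the points of $U_C\setminus\{x\}$ one linear form at a time yields a degree bound of order $2^{n/2}$, which is useless. Your fallback --- rerunning Alon--F\"uredi inside $H_i\cap C$ viewed as a $(d-1)$-dimensional cube --- also breaks down, because $H_i\cap C$ is \emph{not} a subcube (in the example above it is the entire middle layer $\{y:\sum y_l=n/2\}$), so the combinatorial Nullstellensatz/Alon--F\"uredi machinery does not apply there.

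What is missing is a way to cover all of $H_i\cap C\setminus\{x\}$ with $O(k)$ further hyperplanes at once, rather than one per point of $U_C$. The device in the original proof is the antipodal map $y\mapsto\bar y$ on $C$ (flip every coordinate in $S$): reflecting each $H_j$ through the centre of $C$ yields hyperplanes $H_j^\ast$ that again cover $C$, and one analyses the two families $\{H_j:j\ne i\}$ and $\{H_j^\ast:j\ne i\}$ together. A short case distinction on whether $\bar x\in H_i$ (equivalently whether $H_i=H_i^\ast$) then lets one assemble at most $2(k-1)$ hyperplanes avoiding $x$ that cover $C\setminus\{x\}$, and Alon--F\"uredi gives $d\le 2(k-1)<2k$. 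I would recommend abandoning the per-point linear forms $L_{y'}$ and working with the reflected family instead.
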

	
	Using Lemma~\ref{lemma:boundsupp}, we can easily show that every essential cover of the $n$-cube has $\Omega(\sqrt{n})$ hyperplanes.
	Indeed, by Lemma~\ref{lemma:boundsupp}, every $k \times n$ essential matrix $V$ has at most $2k^2$ non-zero entries. 
	On the other hand, every column of $V$ has a non-zero entry, and hence we have at least $n$ non-zero entries in total.
	This implies that $2k^2 \ge n$. 
	In~\cite{LINIAL2005331}, Linial and Radhakrishnan were able to obtain a slightly stronger lower bound (by a multiplicative constant) by showing that in every column we have at least two non-zero entries.
	
	\subsection{First decomposition}\label{firstdecomposition}
	
	Let $v_1,\ldots,v_k \in \R^n$ be vectors with $\ell_2$-norm 1 given by an essential cover of the $n$-cube.
	We would like to apply Proposition~\ref{prop:bang} to show that $k$ cannot be small, as otherwise, there will be a vertex not covered by the hyperplanes.
	To apply this proposition, we need to bound the support and the $\ell_2$-norm of each column of $V$.
	However, we might not have good bounds for these quantities: the support of each column can be as large as $k$ and the norm can be as large as $\Omega(\sqrt{k})$.
	Luckily, we expect a random large submatrix of $V$ to be much more well-behaved.
	
	In the first decomposition (c.f.~Lemma~\ref{lemma:decomposing}), we rescale $V$ and find a large submatrix $V[L_1 \times M_1]$ with the following properties.
	The rows of $V[L_1 \times M_1]$ have $\ell_2$-norm either $0$ or $1$; the columns have support roughly bounded by $2k^2/n$; 
	the $\ell_2$-norm of the column vectors is bounded by $W^{-1/2}$, where $W=W(n)>0$ is an arbitrary function;
	and the size of the set $M_2:= M_1^c$ is roughly bounded by $kW$.
	In the proof of Theorem~\ref{thm:main}, the function $W = (n \log n)^{1/9}$ is chosen so that, even if we apply the first decomposition multiple times inside the matrix,
	the union of the sets $M_2$ will have size at most $n/8$.
	This will help us to find a large submatrix whose rows have $\ell_2$-norm $1$ and whose columns have small norm.
	
	Ideally, to apply Proposition~\ref{prop:bang}, we wish that all the rows in $V[L_1 \times M_1]$ had norm $1$.
	However, this cannot be guaranteed in this step, as the first decomposition (c.f.~Lemma~\ref{lemma:decomposing}) holds for every matrix, not necessarily coming from an essential cover.
	In Lemma~\ref{lemma:seconddecomposition}, which is the core of the proof of Theorem~\ref{thm:main} together with Proposition~\ref{prop:bang}, we refine this decomposition to control the rows with many zeros.
	Once we find a submatrix $V[L_1 \times M_1]$ whose rows have norm either 0 or 1 and whose columns have small support, 
	we still need to deal with the rest of the matrix $V$.
	However, the decomposition is done in such a way that if $i \notin L_1$, then the coordinates of $v_i$ have approximately exponential decay.
	
	For a vector $v$ and a set $A$, denote by $\restr{v}{A}$ the subvector of $v$ restricted to the coordinates in $A$.
	We show that there exists $a \in \{0,1\}^{n-|M_1|}$ such that 
	\[\langle \restr{v_i}{M_1}, u \rangle \neq \mu_i - \langle \restr{v_i}{M_1^c}, a \rangle\]
	for every $i\notin L_1$ and $u \in \{0,1\}^{M_1}$.
	Thus, the strategy is reduced to restrict ourselves to the subcube $\{0,1\}^{M_1} \times a$ and show that $V[L_1 \times M_1]$ is not a covering system for this subcube.

	Let $V \in \R^{k \times n}$ be a matrix with rows $v_1, \dots, v_k$ and $V' \in \R^{k \times n}$ be a matrix with rows $v'_1, \dots, v'_k$. We say that $V' \in \R^{k \times n}$ is a \emph{rescaling} of a matrix $V \in \R^{k \times n}$ if there are real numbers $(\phi_i)_{i = 1}^{k}$ such that $v'_i = \phi_i v_i$ for $i \in [k]$. 
	The first decomposition lemma, obtaining a matrix decomposition as in Figure~\ref{fig:1st_decomp}, is as follows.
	
	\begin{lemma}
		\label{lemma:decomposing}
		There exists $C_3>0$ such that the following holds. For every $V \in \R^{k \times n}$, $S\in \N$ and $W>0$, there are partitions $[k] = L_1 \cup L_2$ and $[n] = M_1 \cup M_2$, with $|M_2| \le C_3 kSW$, such that for some rescaling $V'$ of $V$ we have the following.
		\begin{enumerate}[\rm (i)]
			\item \label{itm:oneLem}
			Every row in $V'[L_1 \times M_1]$ has $\ell_2$-norm either $0$ or $1$;
			
			\item Every column in $V'[L_1 \times M_1]$ has $\ell_2$-norm strictly less than $W^{-1/2}$;
			
			\item Every row $i \in L_2$ of $V'$ has $S$ scales, and the position of its smallest scale contains the $M_1$ columns. 
		\end{enumerate}
	\end{lemma}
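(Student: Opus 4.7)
The plan is to mimic the algorithmic approach of Lemma~\ref{lem:n1/3}, iteratively ``peeling off'' columns with large $\ell_2$-contribution from $M_1$ to $M_2$ while tracking, for each row, how many times its norm drops substantially. I initialize $L_1 = [k]$, $L_2 = \emptyset$, $M_1 = [n]$, $M_2 = \emptyset$, and a scale counter $s_i = 0$ for each row. At round $t$ I set $\tau_i^{(t)} := \|v_i^{M_1}\|_2$ for $i \in L_1$ and compute the bad-column set
\[
J_t := \Bigl\{\, j \in M_1 \ : \ \sum_{i \in L_1,\ \tau_i^{(t)} > 0} \bigl(v_{ij}/\tau_i^{(t)}\bigr)^2 \ \ge\ 1/W \,\Bigr\}.
\]
If $J_t = \emptyset$, halt; otherwise, move $J_t$ into $M_2$. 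Since the temporarily rescaled rows each have unit $\ell_2$-norm over $M_1$, the total column mass equals $|\{i \in L_1 : \tau_i^{(t)} > 0\}| \le k$, so $|J_t| \le kW$. Then, for every $i \in L_1$ satisfying $\|v_i^{M_1 \setminus J_t}\|_2 \le \tau_i^{(t)}/(2C_1)$, I increment $s_i$ and move $i$ to $L_2$ as soon as $s_i$ reaches $S - 1$.

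On termination, set the final rescaling $\phi_i := 1/\|v_i^{M_1}\|_2$ for $i \in L_1$ with nonzero $M_1$-restriction (and $\phi_i := 1$ otherwise). Property~(i) is then immediate. Property~(ii) follows because the halting condition $J_t = \emptyset$ gives $\sum_{i \in L_1}(v_{ij}/\tau_i^{(t)})^2 < 1/W$ for every $j \in M_1$, and this sum coincides with $\|v'_{*j}^{L_1}\|_2^2$ under the final rescaling. For property~(iii), given $i \in L_2$ with loss rounds $t_1 < \cdots < t_{S-1}$ (and $t_0 := 0$), set
\[
P_s^i := \bigcup_{u = t_{s-1}+1}^{t_s} J_u \quad (s \in [S-1]), \qquad P_S^i := [n] \setminus \bigcup_{s < S} P_s^i.
\]
Then $P_S^i$ equals the value of $M_1$ at the moment $i$ moves to $L_2$, which contains the final $M_1$. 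The telescoping identity $\|v_i^{P_s^i}\|_2^2 = (\tau_i^{(t_{s-1}+1)})^2 - (\tau_i^{(t_s+1)})^2$, combined with $\tau_i^{(t_{s-1}+1)} \ge \tau_i^{(t_s)} \ge 2C_1\, \tau_i^{(t_s+1)}$, yields $\|v_i^{P_s^i}\|_2^2 \ge (4C_1^2 - 1)(\tau_i^{(t_s+1)})^2 \ge C_1^2 \|v_i^{P_{s+1}^i}\|_2^2$, verifying the scale-ratio condition. For $i \in L_2$ the rescaling $\phi_i = 1$ is harmless since the scale property is scaling-invariant.

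The main obstacle will be bounding $|M_2| = \sum_t |J_t|$ by $C_3 k S W$. The trivial estimate $|M_2| \le T \cdot kW$, where $T$ is the number of rounds, requires $T = O(S)$, which does not follow immediately from the algorithm: bad columns can be peeled off over many rounds without ever forcing a row to lose a scale (consider a single row whose coordinates decay geometrically with ratio just below $2C_1$). To close this gap one needs a charging argument exploiting that the total number of loss events is at most $k(S-1)$, together with additional control over rounds in which no row loses a scale. One natural route is to group rounds into $S-1$ geometric epochs in which all remaining rows in $L_1$ keep $\tau_i^{(t)}/\|v_i\|_2$ within a fixed dyadic band, so that each epoch contributes $O(kW)$ columns to $M_2$; the precise bookkeeping is the technically delicate step and dictates the value of $C_3$.
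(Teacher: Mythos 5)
Your algorithm is close in spirit to the paper's, but your scale-increment rule is per-round rather than cumulative, and this is a genuine gap in the argument, not merely a missing bookkeeping step. You increment $s_i$ only when $\|v_i^{M_1 \setminus J_t}\|_2 \le \tau_i^{(t)}/(2C_1)$ holds within a \emph{single} round. A row can, however, lose mass steadily at a rate that never crosses this per-round threshold while its restricted norm decays to essentially zero: for a single row with coordinates $v_{1j} \propto (1-\tfrac{1}{W})^j$, each round removes $\Theta(W)$ bad columns while $\tau_1^{(t+1)}/\tau_1^{(t)} \approx 1/\sqrt{2} > 1/(2C_1)$, so $s_1$ is never incremented, the row never leaves $L_1$, and the algorithm removes $\Theta(n)$ columns in total, violating $|M_2|\le C_3kSW$. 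Your diagnosis of the obstacle --- that one needs $T=O(S)$ rounds --- is also not the right framing: the paper does not bound the number of rounds, it bounds the total rescaled mass removed.

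The paper closes this gap by making the rescaling part of the algorithm. After each column removal, any row whose $\ell_2$-norm on $M_1$ has dropped below a fixed constant $\sqrt{\tau}$ (with $\tfrac{1-\tau}{\tau}=C_1^2$) \emph{relative to its value at the last rescaling} is multiplied back up to norm $1$, and that rescaling event is precisely what counts as one scale. With this cumulative trigger, a row still in $L_1$ contributes at most one unit of rescaled mass per epoch and passes through at most $S$ epochs, so the total rescaled mass removed is at most $kS$; since every removed column accounts for at least $\tau W^{-1}$ of that mass, one gets $|M_2|\le kSW/\tau$. The rest of your write-up --- the choice of $J_t$, the verification of (i) and (ii), and the telescoping partition $P_1^i,\dots,P_S^i$ for (iii) --- is sound and would carry over once the trigger is made cumulative; your ``geometric epochs'' instinct is exactly right, but the epochs must be measured against the norm at the last rescaling, not at the start of the current round.
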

	
	\begin{figure}[!ht]
		\centering
		\includegraphics[width=12cm]{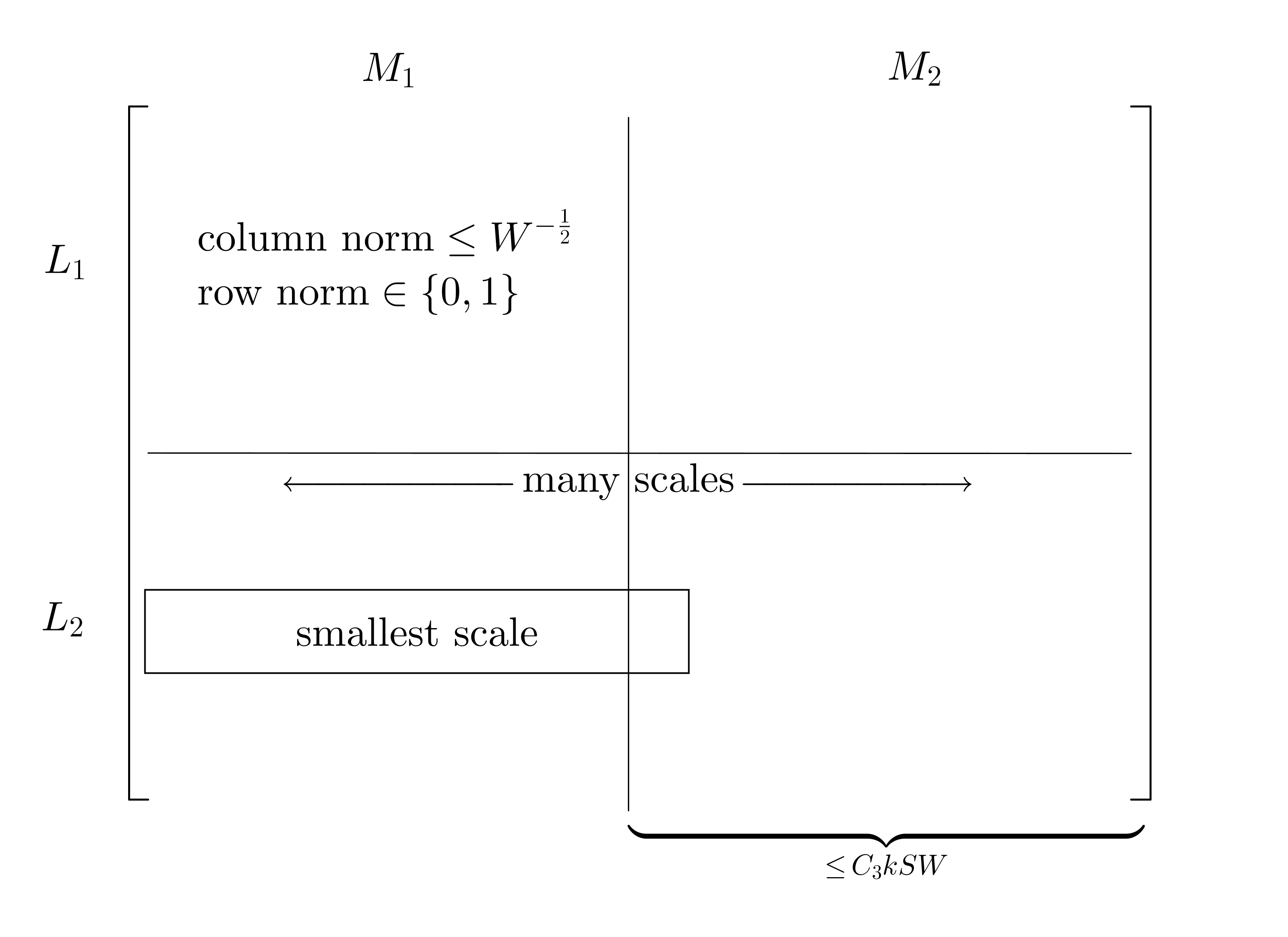}
		\captionsetup{justification=centering,margin=2cm}
		\caption{The decomposition of a rescaling of $V$ as in Lemma~\ref{lemma:decomposing}.}
		\label{fig:1st_decomp}
	\end{figure}
	
	\begin{proof}
		Without loss of generality, assume that each column of $V$ has $\ell_2$-norm equal to $1$.
		The strategy to obtain the desired partitions $[k] = L_1 \cup L_2$ and $[n] = M_1 \cup M_2$ is the following algorithm. Start with $L_1=[k]$ and $M_1=[n]$ and move columns from $M_1$ to $M_2$ until every column in $M_1$ has small norm. While moving those columns, if a row has a certain fraction of its norm moved to $M_2$, then this creates a scale. If we create $S$ scales for a row, then we move that row to $L_2$. 
		
		Formally, the algorithm is as follows. Let $\tau > 0$ be so that $\frac{1-\tau}{\tau}=C_1^2$, where $C_1$ is the constant from Definition~\ref{definition:many_scales}.

		\vspace{1.5mm} \noindent \textbf{Initiate} $L_1 \gets [k]$, $L_2 \gets \emptyset$, $M_1 \gets [n]$ and $M_2 \gets \emptyset$.
		\vspace*{3mm}
		
		\noindent \textbf{While} there exists a column $j \in M_1$ with $\sum_{i \in L_1} {v_{ij}}^2 \geq \tau W^{-1}$, do:
		\vspace*{2mm}
		
		1. Set $M_1 \gets M_1\setminus \{j\}$ and $M_2 \gets M_2\cup \{j\}$. If there is more than one such column $j$, 
		
		then we choose only one of them arbitrarily.
		
		\vspace*{2mm}

		2. For $i \in L_1$ such that $\| \restr{v_i}{M_1} \|_2^2 \in (0,\tau]$,
		multiply the row $i$ by $\| \restr{v_i}{M_1} \|_2^{-2}$.
		That is, set
		\[v_i \gets v_i \cdot \| \restr{v_i}{M_1} \|_2^{-2}.\]
		
		\vspace*{2mm}
		
		3. If Step 2 is executed $S$ times for a row $i$, then set $L_1 \gets L_1\setminus \{i\}$ and $L_2 \gets L_2\cup \{i\}$.
		
		$M_2\gets M_2\cup \supp(v_i)$ and $M_1 \gets M_1 \setminus \supp(v_i)$;
		
		\vspace*{3mm}
		
		\noindent \textbf{For} $i \in L_1$ such that $\| \restr{v_i}{M_1} \|_2 \neq 0$, multiply the row $i$ by $\| \restr{v_i}{M_1} \|_2^{-2}$.
		That is, set
		\[v_i \gets v_i \cdot \| \restr{v_i}{M_1} \|_2^{-1}.\]
		
		\noindent \textbf{Output:} $V$, $L_1$, $L_2$, $M_1$ and $M_2$.
		
		\noindent \textbf{end.}
		
		We now make some simple observations about the algorithm.
		At Step 3 of the \emph{While} loop, if $\| \restr{v_i}{M_1} \|_2^2 \le \tau$ for some $i \in L_1$, then 
		multiplying $v_i$ by $\| \restr{v_i}{M_1} \|_2^{-2}$ renormalizes the vector $v_i$ so that its $\ell_2$-norm restricted to the set $M_1$ is 1. 
		For each $i \in [k]$, denote by $v_i(t)$ and $M_1(t)$ the vector $v_i$ and the set $M_1$ after the $t$-th iteration of the \emph{While} loop, respectively.
		Let $t_1 < t_2$ be two steps corresponding to two consecutive renormalizations of a row $v_i$.
		Before executing Step 3 in iteration $t_2$, we have the following mass distribution in $v_i(t_2-1)$.
		\begin{align*}
			\big \| \restr{v_i(t_2-1)}{M_1(t_1)\setminus M_1(t_2)} \big\|_2^2 \ge 1-\tau
			\qquad \text{and} \qquad
			\big \| \restr{v_i(t_2-1)}{M_1(t_2)} \big\|_2^2 \le \tau.
		\end{align*}
		This implies that
		\begin{align*}
			\big \| \restr{v_i(t_2-1)}{M_1(t_1)\setminus M_1(t_2)} \big\|_2^2 \ge \dfrac{1-\tau}{\tau} \cdot \big \| \restr{v_i(t_2-1)}{M_1(t_2)} \big\|_2^2.
		\end{align*}
		Recall $\frac{1 - \tau}{\tau} = C^2_1$.
		Then, each renormalization of $v_i$ corresponds to a new scale. 
		As we move a vector to $L_2$ if we renormalize it $S$ times, every row in $L_2$ has $S$ scales. Moreover, by construction, the position of the smallest scale contains the $M_1$ columns.
		
		Let $T$ be the last step of the \emph{While} loop and $V'$ be the rescaled matrix output by the algorithm. 
		The renormalization of the rows after the end of the \emph{While} loop
		guarantees that Property~(i) holds.
		Before this renormalization is executed, observe that every column $j \in M_1(T)$ satisfies 
		\[ \sum_{i \in L_1} {v_{ij}}(T)^2 < \tau W^{-1},\]
		and every row $i\in L_1$ has either $\ell_2$-norm zero or 
		\[\sum_{j \in M_1} {v_{ij}}(T)^2 > \tau.\]
		This implies that, after the final renormalization, for each $j \in M_1$
		the sum $\sum_{i \in L_1} {v_{ij}}(T)^2 $ can increase by a factor of at most $1/\tau$, and hence
		\[\sum_{i \in L_1} {v'}^{2}_{ij} < \dfrac{1}{\tau}\cdot \tau W^{-1} = W^{-1}.\]
		This shows that Property~(ii) holds.
		
		It remains to prove that the algorithm ends with $|M_2| \le C_3kSW$, for some absolute constant $C_3>0$. Indeed, each row can be renormalized at most $S$ times and the total sum $\sum v_{ij}^2$ moved from $V[L_1 \times M_1]$ to $V[L_1 \times M_2]$ during the algorithm is at most $kS$. On the other hand, every time a column is moved to $M_2$, the sum $\sum_{\substack{i \in L_1, \\j \in M_1}} v_{ij}^2$ loses a mass of size at least $\tau W^{-1}$. Therefore, we have
		\[|M_2| \cdot \tau W^{-1} \le kS,\]
		and hence the number of columns moved to $M_2$ is at most $kSW/\tau$, as desired.
	\end{proof}
	
	\subsection{Second decomposition}\label{seconddecomposition}
	
	To motivate the second decomposition, let us recall the proof strategy.
	Suppose for contradiction that there exists an essential covering system $Vx=\mu$, where $V \in \R^{k \times n}$ and $k = O \left( \frac{n^{5/9}}{(\log n)^{4/9}} \right)$, for a sufficiently small implicit constant.
	We would like to arrive at a contradiction by finding a vector $u \in \{0,1\}^n$ which is covered by none of the hyperplanes from the system $Vx=\mu$. 
	The first step is to apply Lemma~\ref{lemma:decomposing}.
	Let $L_1 \cup L_2 = [k]$ and $M_1 \cup M_2 = [n]$ be the partition of the rows and columns of $V$ given by Lemma~\ref{lemma:decomposing}, respectively.
	Let $Z$ be the set of rows in $L_1$ which are $0$ when restricted to $M_1$.
	Ideally, we would like to show that for every $i \in Z$ we have
	\begin{align}\label{eq:supbig}
		|\supp(\restr{v_i}{M_2})|>4|Z|^2.
	\end{align}
	If this was true, then we could use the Littlewood--Offord lemma (c.f.~Lemma~\ref{anti-concentration:sparse-vectors}) and Lemma~\ref{lem:probBound} to find a vector $a \in \{0,1\}^{M_2}$ such that 
	\[\langle \restr{v_i}{M_1}, u \rangle \neq \mu_i - \langle \restr{v_i}{M_2}, a \rangle\]
	for every $i\in Z \cup L_2$ and $u \in \{0,1\}^{M_1}$.
	Thus, we could restrict ourselves to the subcube $\{0,1\}^{M_1} \times a$ and apply Proposition~\ref{prop:bang} to $V[(L_1\setminus Z) \times M_1]$ to show that there exists a vector which is not covered.
	Unfortunately, we cannot guarantee that~\eqref{eq:supbig} holds in this step. 
	Instead, we decompose the matrix $V$ much further depending on the size of $Z$.
	If $|Z|$ is big, then we ignore the rows in $Z$ and apply Lemma~\ref{lemma:decomposing} to the matrix $V_1 := V[(L_1\setminus Z) \times M_1]$.
	From Lemma~\ref{lemma:decomposing} we obtain partitions $L_1^{1}\cup L_2^{1}$ and $M_1^{1}\cup M_2^{1}$ of the rows and columns of $V_1$, respectively.
	Similarly, we define $Z_1$ to be the set of rows in $L_1^{1}$ which are $0$ when restricted to $M_1^{1}$.
	If $|Z_1|$ is big, then we apply the same procedure to $V_2 := V[(L_1^{1}\setminus Z_1) \times M_1^{1}]$.
	We repeat this process until we arrive at a submatrix $V_i$ where $|Z_i|$ is small.
	We show that this procedure does not last very long, and hence $V_i$ still has $n/2$ columns.
	Once $|Z_i|$ is small, then we have a better chance of showing that~\eqref{eq:supbig} holds in $V_i$.
	Unfortunately, when we arrive at $V_i$ we still cannot guarantee that~\eqref{eq:supbig} holds in $V_i$. 
	However, since $|Z_i|$ is small, we are only able to show that few rows in $V_i$ do not satisfy~\eqref{eq:supbig}.
	By ignoring these rows and modifying the previous algorithm slightly, we obtain a matrix decomposition as in Figure~\ref{fig:3rd_decomp}.
	Formally, we have the following.
	
	\begin{lemma} \label{lemma:seconddecomposition}
		There exists $C_4>0$ such that the following holds. Let $V \in \R^{k \times n}$ be an essential matrix, $W >0$ and $S\in \N$ be such that $C_3kSW \le n/8$, where $C_3>0$ is the constant given by Lemma~\ref{lemma:decomposing}.
		If $k \le C_4 \cdot (S\cdot W)^{-2/5} \cdot n^{3/5}$, then there exist partitions $[k] = K_1 \cup K_2 \cup K_3 \cup K_4$ and $[n] = N_1 \cup N_2 \cup N_3$ with $|N_1| \geq n/2$ such that for some rescaling $V'$ of $V$ we have the following.
		\begin{enumerate}[\rm (i)]
			\item \label{itm:sparseCol}
			Every column in $N_1 \cup N_2$ has support of size at most $16k^2/n$;
			
			\item $V'[K_1 \times (N_1 \cup N_2)] = 0$ and $V'[K_2 \times N_1] = 0$;
			
			\item In $V'[K_2 \times N_2]$, every row has support of size at least $4|K_2|^2$;
			
			\item  \label{small_l2} In $V'[K_3 \times N_1]$, every row has $\ell_2$-norm 1 and every column has $\ell_2$-norm at most $W^{-1/2}$.
			In particular, item~(\ref{itm:sparseCol}) implies that for every $j \in N_1$ we have
			\begin{align} \label{small_linf}
				\sum_{i \in K_3} |v'_{ij}| < (W^{-1}\cdot 16 k^2/n)^{1/2}.
			\end{align}
			\item \label{item:manyscales} In $V'[K_4 \times (N_1 \cup N_2)]$, every row has $S$ scales, the smallest scale is non-zero and its position contains $N_1$.
		\end{enumerate}
	\end{lemma}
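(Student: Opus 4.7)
The plan is to construct the required partitions in three stages: (a) set aside heavy columns into $N_3$; (b) iteratively apply Lemma~\ref{lemma:decomposing} up to three times to collect many-scale rows into $K_4$ while shrinking the active column block; (c) peel the accumulated ``zero on $M_1$'' rows to separate them into $K_1$ and $K_2$. For~(a), Lemma~\ref{lemma:nati} yields $\sum_{j}|\supp(v_{*j})| = \sum_i |\supp(v_i)|\le 2k^2$, so the heavy set $B := \{j : |\supp(v_{*j})|>16k^2/n\}$ has $|B|\le n/8$, and I declare $B\subseteq N_3$ at the outset; this ensures~(i) for every column that eventually lies in $N_1\cup N_2$.

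For~(b), set $R_0:=[k]$ and $C_0:=[n]\setminus B$. At step $t$, apply Lemma~\ref{lemma:decomposing} with parameters $S,W$ to $V[R_t\times C_t]$, obtaining a rescaling and partitions $R_t=L_1^{(t)}\cup L_2^{(t)}$, $C_t=M_1^{(t)}\cup M_2^{(t)}$, with $|M_2^{(t)}|\le C_3 k S W\le n/8$. I put $L_2^{(t)}$ into $K_4$: these rows have $S$ scales whose smallest-scale position contains $M_1^{(t)}\supseteq M_1^{(T)}=: N_1$, giving~(v). Let $Z_t := \{i\in L_1^{(t)} : \restr{v'_i}{M_1^{(t)}}=0\}$ and continue with $R_{t+1}:=L_1^{(t)}\setminus Z_t$, $C_{t+1}:=M_1^{(t)}$, iterating up to three times. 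Then $|N_1| \ge n - |B| - \sum_{t<T}|M_2^{(t)}|\ge n - n/8 - 3\cdot n/8 = n/2$.

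For~(c), let $Z:=\bigcup_t Z_t$ and $M:=\bigcup_t M_2^{(t)}$. Starting from $K_2:=Z$ and $N_3:=B$, I peel: while some $i\in K_2$ has $|\supp(v'_i)\cap (M\setminus N_3)|<4|K_2|^2$, remove $i$ from $K_2$ and add $\supp(v'_i)\cap M$ to $N_3$. When peeling halts, set $N_2:=M\setminus N_3$, $K_1:=Z\setminus K_2$, and $K_3:=L_1^{(T)}\setminus Z$. Condition~(iii) follows immediately from the peeling termination criterion; condition~(ii) holds because every row of $Z$ vanishes on $N_1$ (so $V'[K_1\cup K_2\times N_1]=0$) and every peeled row has its $M$-support absorbed into $N_3$ (so $V'[K_1\times N_2]=0$); condition~(iv) comes from Lemma~\ref{lemma:decomposing}(i)--(ii) applied at step $T$ (row-norm $1$ on $N_1$, column $\ell_2$-norm at most $W^{-1/2}$), and \eqref{small_linf} is an immediate Cauchy--Schwarz consequence of~(i) and~(iv).

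The main obstacle is the interplay between the column budget and the quadratic support inequality in~(iii). The peeling in~(c) creates no new columns in $N_2\cup N_3$ because it only reclassifies columns already in $M\cup B$, so the $n/2$ budget for $N_1$ survives. What requires care is tuning $S$ and $W$: Lemma~\ref{lemma:nati} forces $4|K_2|^3\le \sum_{i\in K_2}|\supp(v'_i)|\le 2k|K_2|$, hence $|K_2|\le \sqrt{k/2}$; this bound must be compatible with the downstream application of Proposition~\ref{prop:bang} to $V'[K_3\times N_1]$, whose hypothesis $2\alpha\beta\log(4|K_3|)\le 1$ demands $\alpha\le 16k^2/n$ and $\beta\le W^{-1}$. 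The exponent balancing among the three-iteration budget $C_3kSW\le n/8$, the constraint $|K_2|^2\le k/2$, and the column-norm condition for Proposition~\ref{prop:bang} produces exactly the inequality $k\le C_4(SW)^{-2/5}n^{3/5}$ that appears in the statement.
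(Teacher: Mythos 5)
Your construction separates the iteration (stage (b)) from the peeling (stage (c)), whereas the paper's algorithm interleaves them, and this separation creates a genuine gap in property~(v). You place every $L_2^{(t)}$, for $t=0,\dots,T$, into $K_4$, and Lemma~\ref{lemma:decomposing} only guarantees that a row $i\in L_2^{(t)}$ has $S$ scales \emph{over the column set $C_t$}, with the large scales $P_1,\dots,P_{S-1}$ sitting inside $M_2^{(t)}$. But in stage (c) the peeling moves columns of $M=\bigcup_s M_2^{(s)}$ into $N_3$, and a peeled row $j\in Z$ can have $\supp(v'_j)\cap M_2^{(t)}\neq\emptyset$; removing those columns shrinks $\|v^{(s)}\|_2$ for $s<S$ and can break the decay $\|v^{(s)}\|_2\ge C_1\|v^{(s+1)}\|_2$, so $\restr{v'_i}{N_1\cup N_2}$ need not have $S$ scales. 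Moreover, $N_1\cup N_2$ may contain columns of $M_2^{(s)}$ for $s<t$ that lie outside $C_t$ altogether; you would at minimum need to argue these extra coordinates can be absorbed into the top scale. The paper sidesteps both issues by running the zero-row/sparse-row pruning \emph{inside} the loop: the last call to Lemma~\ref{lemma:decomposing} is applied to exactly the surviving columns, so $C_T$ coincides with $N_1\cup N_2$ at termination and only that final $L_2^T$ becomes $K_4$; the earlier $L_2^j$ rows are fed back in and reclassified.

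A second, related issue is that you never use the hypothesis $k\le C_4(SW)^{-2/5}n^{3/5}$ inside the lemma's proof; you gesture that it arises from ``exponent balancing'' with the downstream use of Proposition~\ref{prop:bang}, but that is the proof of Theorem~\ref{thm:main}, not this lemma. In the paper this hypothesis is used precisely to show the iteration terminates without exhausting the $n/2$ column budget: the count of columns discarded when $|Z_i|$ is large is controlled via Claim~\ref{claim:boundsum} (giving condition~\eqref{beta:1st-condition}), and the count from sparse-row removals is bounded by $4(C_3kSW)^{2\gamma}k\le n/8$ (condition~\eqref{beta:2st-condition}); choosing $\gamma=1/3$ makes both reduce to $k\lesssim (SW)^{-2/5}n^{3/5}$. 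Your fixed three-iteration cap avoids the counting, but only because you have also deferred the peeling to the end — which is exactly what causes the scale-structure problem above. You cannot have both: either you peel inside the loop (and then must count iterations, needing the hypothesis), or you peel afterward (and then must show the scales survive the peeling, which you have not done and which in general they do not).
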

	
	\begin{figure}[ht]
		\centering
		\includegraphics[width=12cm]{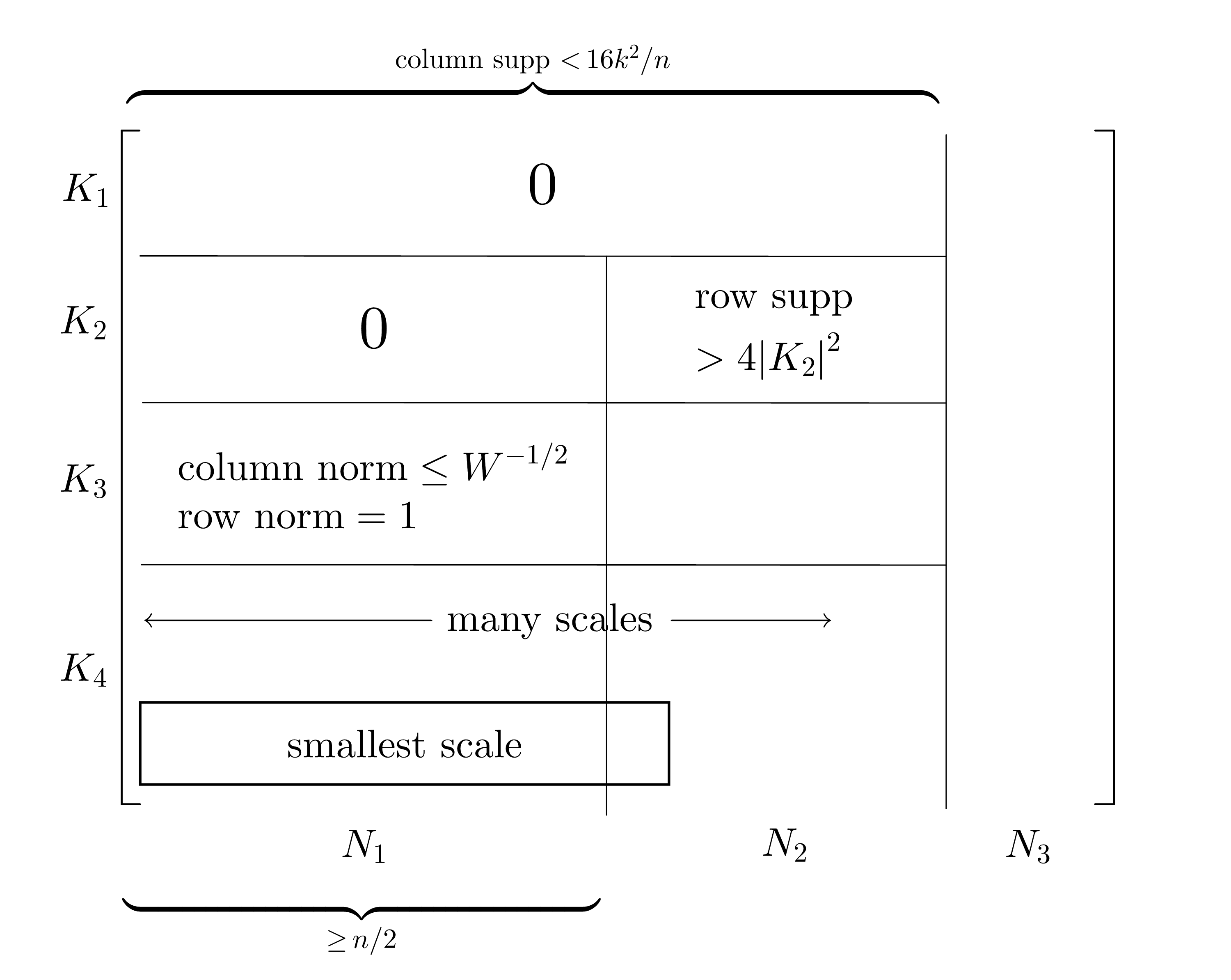}
		\captionsetup{justification=centering,margin=2cm}
		\caption{The decomposition of $V$ in the proof of Lemma~\ref{lemma:seconddecomposition}.}
		\label{fig:3rd_decomp}
	\end{figure}
	
	\begin{proof}
		
		Let $N_3$ be the set of columns with support of size at least $16 k^2 / n$.
		As the number of non-zero entries in $V$ is at most
		$2k^2$ (c.f.~Lemma~\ref{lemma:nati}), we have $|N_3| \le n/8$.
		Let $K_1$ be the set of $i$'s such that $\restr{v_i}{N_3^c} = 0$, where $N_3^c :=[n] \setminus N_3$. 
		Let $\gamma \in (0,1)$ be a parameter to be optimized later (which will be 1/3).
		The partition is obtained by the following algorithm.
		
		\begin{enumerate}
			\item[0.] Set $N_3 \gets \{j\in [n]: |\supp(v_{*j})|\ge 16k^2/n\}$ and $K_1 \gets \{i\in [k]: \restr{v_i}{N_3^c} = 0\}$;
			set $i \gets 0$ and $V_{i} \gets V[K_1^c \times N_3^c]$.
			
			\item[1.] Apply Lemma~\ref{lemma:decomposing} to $V_{i}$.
			Let $V_i'$ be the rescaling of $V_i$ given by Lemma~\ref{lemma:decomposing}. Let $L_1^{i} \cup L_2^{i}$ and $M_1^{i} \cup M_2^{i}$ be the partitions of the rows and columns of $V'_i$, respectively.
			
			\item[2.] Set $Z_i \gets \{j \in [k]: \restr{v_j'}{M_1^i} = 0\}$.
			
			\item[3.] \label{condition1:large_k} 
			If $|Z_i|>|M_2^{i}|^{\gamma}$, then set $K_1 \gets K_1 \cup Z_i$ and $N_3 \gets N_3 \cup M_2^i$.
			
			\item[4.] If $|Z_i| \le |M_2^{i}|^{\gamma}$, then check if there exists $i^{*} \in Z_i$ such that 
			\begin{align}\label{condition2: sparse_V}
				\Big |\supp\left(\restr{v'_{i^*}}{M_2^i}\right) \Big |\leq 4 |Z_i|^2.
			\end{align}
			If it does, then set $K_1 \gets K_1 \cup \{i^{*}\}$ and $N_3 \gets N_3 \cup \supp(v'_{i^{*}})$; if it does not, then proceed to Step 5.
			
			\item[5.] Set $i \gets i+1$ and $V_{i} = V'_i[K_1^c \times N_3^c]$.
			
			\item[6.] Repeat Steps 1--5 until the condition on Step 4 is no longer satisfied and there is no $i^{*}$ as in Step 4.
			
			\item[7.] Set $K_2 = Z_i$, $K_3 = L_2^i$ and $N_2 = M_2^i$. 
			Set $V'$ to be the rescaling of $V$ obtained by performing all operations above. Finally, finish the algorithm.
		\end{enumerate}
		
		Observe that the matrix structure obtained via the algorithm is as in Figure~\ref{fig:3rd_decomp}, which already implies properties (i)--(iv) of the lemma.
		Moreover, any of the sets $K_i$ might be empty, but $K_3$ and $K_4$ cannot be empty at the same time. Otherwise, we would have a column of zeros, a contradiction.
		We emphasize that having some empty sets in the partition is not a problem as long as it does not contradict the essential cover property of the matrix. 
		
		Now, it only remains to show that $|N_1| \ge n/2$.
		We start by proving an upper bound for $|N_3|$.
		As we notice at the beginning of the proof, the initial set $N_3$ has size at most $n/8$.
		Now, we bound the number of columns added to $N_3$ due to Step 4 of the algorithm.
		Let $J$ be the set of $i$'s for which Step 4 is executed.
		By simplicity, set $|M_2^i| = m_i$. 
		Suppose for contradiction that the number of columns added to $N_3$ due to Step 4 is at least $n/8$. That is, $\sum_{i \in J} m_i \ge n/8$.
		Now, observe that
		\begin{align}\label{eq:beta-1}
			k \ge \sum \limits_{i \in J} |Z_i| \ge \sum \limits_{i \in J} m_i^{\gamma}.
		\end{align}
		By Lemma~\ref{lemma:decomposing}, we have $m_i \leq C_3kSW$ for all $i$.
		We shall use this and the following claim to obtain a lower bound for the last sum in~\eqref{lemma:decomposing}.
		
		\begin{claim}\label{claim:boundsum}
			Let $\gamma \in (0,1]$ and $A, B \in \mathbb{R}_{\ge 0}$.
			If $(m_i)_{i \in [t]} \in [0,A]$ and
			$\sum_{i \in [t]} m_i \geq B$, then
			$\sum_{i \in [t]} m_i^\gamma \geq \tfrac{B}{2A} A^{\gamma}$.
		\end{claim}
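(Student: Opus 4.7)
The plan is to exploit the concavity of the map $x \mapsto x^{\gamma}$ on $[0,A]$ when $\gamma \in (0,1]$, via a single pointwise inequality that I then sum over $i \in [t]$. If $A = 0$ then every $m_i$ vanishes and hence so does $B$, so the claim is vacuous; I will therefore assume $A > 0$.

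The key pointwise estimate is $m_i^{\gamma} \ge A^{\gamma - 1}\, m_i$ for every $i \in [t]$. To see this, set $x_i := m_i/A \in [0,1]$. Since $\gamma \le 1$, we have $x_i^{\gamma} \ge x_i^1 = x_i$ (any number in $[0,1]$ raised to a power at most $1$ does not decrease). Multiplying both sides by $A^{\gamma}$ gives $m_i^{\gamma} \ge A^{\gamma-1}\, m_i$, as desired.

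Summing this inequality over $i \in [t]$ and using the hypothesis $\sum_{i} m_i \ge B$ yields
\[
\sum_{i \in [t]} m_i^{\gamma} \;\ge\; A^{\gamma-1} \sum_{i \in [t]} m_i \;\ge\; A^{\gamma-1}\, B \;=\; \frac{B}{A}\, A^{\gamma} \;\ge\; \frac{B}{2A}\, A^{\gamma},
\]
which is in fact a factor of two stronger than the stated bound. There is no real obstacle here: the proof is a one-line application of the elementary inequality $x^{\gamma} \ge x$ for $x \in [0,1]$ and $\gamma \in (0,1]$, and the constant $1/2$ in the statement is simply slack that is convenient in the ambient argument.
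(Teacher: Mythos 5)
Your proof is correct, and it takes a genuinely different route from the paper. The paper partitions the index set $[t]$ into blocks $R_1,\ldots,R_Q$ each with block sum in $[A,2A]$ (so that $Q \ge B/(2A)$) and then uses the superadditivity inequality $x^\gamma + y^\gamma \ge (x+y)^\gamma$ for $x,y\ge 0$ to push $\gamma$ outside each block, obtaining $\sum_i m_i^\gamma \ge \sum_q \big(\sum_{i\in R_q} m_i\big)^\gamma \ge Q\,A^\gamma$. You instead apply the pointwise bound $m_i^\gamma \ge A^{\gamma-1}m_i$ (equivalently, $x^\gamma \ge x$ for $x\in[0,1]$, $\gamma\le 1$) and simply sum. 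Your approach is shorter, avoids the grouping step entirely (which in the paper requires a brief care about leftover blocks with small sum), and in fact yields the stronger conclusion $\sum_i m_i^\gamma \ge \tfrac{B}{A}A^\gamma$, improving the stated constant by a factor of $2$; the extra slack makes no difference in the ambient application, but your version is the cleaner lemma.
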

		
		\begin{proof}
			Partition $[t]$ into sets $R_1,R_2,\ldots,R_Q$ so that for 
			all $q \in [Q]$ we have $A \leq \sum_{i \in R_q} m_i \leq 2A$
			and $Q \geq \tfrac{B}{2A}$.
			As $x^\gamma + y^\gamma \geq (x+y)^\gamma$ for all $x,y \geq 0$, we obtain
			\[ \sum \limits_{i \in [t]} m_i^{\gamma} \ge \sum \limits_{q \in [Q]} \left ( \sum \limits_{i \in R_q} m_i \right)^{\gamma} \ge \dfrac{B}{2A}\cdot A^{\gamma}.\qedhere\]
		\end{proof}
		By Lemma~\ref{lemma:decomposing} and Claim~\ref{claim:boundsum}, we obtain
		$\sum_{i \in J} m_i^{\gamma} \geq \frac{n}{8 C_3 kSW} \cdot (C_3 kSW)^{\gamma}$, which is a contradiction if $\gamma> 0$ is such that
		\begin{align}\label{beta:1st-condition}
			\frac{n}{8 C_3 kSW} \cdot (C_3 kSW)^{\gamma} \geq k.
		\end{align}
		This is the first condition we must have on $\gamma$ to guarantee that Step 4 does not add more than $n/8$ columns to $N_3$.
		The second condition on $\gamma$ is given by the number of columns added to $N_3$ due to Step 5, which is bounded by
		\[\sum_{i \notin J} 4 |Z_i|^2 \leq \sum_{i \notin J} 4 m_i^{{2\gamma}}
		\leq  4 (C_3 kSW)^{2\gamma} k. \]
		Thus, Step 5 adds at most $n/8$ columns to $N_3$ if 
		\begin{align}\label{beta:2st-condition}
			4 (C_3 kSW)^{2\gamma} k \le n/8.
		\end{align}
		
		The last condition we need is that $C_3kSW \le n/8$. In fact, in the last step $T$, we set $N_2 = M_2^T$ and, by Lemma~\ref{lemma:decomposing}, we have $|M_2^T| \le C_3kSW$. 
		Thus, if $C_3kSW \le n/8$, then we guarantee that at Step 7 we remove at most $n/8$ columns from $N_1$. 
		In summary, it follows that Steps 1, 4, 5 and 6 each remove at most $n/8$ columns from $N_1$.
		It follows, if~\eqref{beta:1st-condition} and~\eqref{beta:2st-condition} are satisfied, that $|N_1| \ge n/2$ and $C_3kSW \le n/8$.
		On the other hand,~\eqref{beta:1st-condition} and~\eqref{beta:2st-condition} together are equivalent to 
		\begin{align*}
			\dfrac{k}{n} \le \min \left \{ \dfrac{(C_3 kSW)^{-2\gamma}}{16}, \, 
			\dfrac{(C_3 kSW)^{\gamma -1}}{8}  \right \}.
		\end{align*}
		Hence, the optimal choice of $\gamma = 1/3$ proves the lemma.
	\end{proof}
	
	\section{Proof of Theorem \ref{thm:main}} \label{sec:proof}
	
	Let $k, n, S \in \N$ and $W>0$ be such that $C_3kSW \le n/8$ and $k \le C_4 \cdot (S W)^{-2/5} \cdot n^{3/5}$,
	where the constants $C_3$ and $C_4$ are given by Lemmas~\ref{lemma:decomposing} and~\ref{lemma:seconddecomposition}, respectively.
	Our goal is to optimize the parameters $k$, $S$ and $W$ and use Proposition~\ref{prop:bang} and Lemma~\ref{lemma:seconddecomposition}. 
	The optimal choices are $S=\lfloor C_5 \log n \rfloor$, where $C_5$ is a large constant, and $W=c (n \log n)^{1/9}$, where $c$ is a small constant, which shows that we cannot have an essential cover with
	$k=O \left( \frac{n^{5/9}}{(\log n)^{4/9}} \right)$ hyperplanes. 
	
	Let $V \in \mathbb{R}^{k \times n}$ and suppose that $Vx=\mu$ is an essential covering system where $k = O \left( \frac{n^{5/9}}{(\log n)^{4/9}} \right)$,
	for a sufficiently small implicit constant.
	Let $[k] = K_1 \cup K_2 \cup K_3 \cup K_4$ and $[n] = N_1 \cup N_2 \cup N_3$ be the partitions of the rows and columns of $V$ given by Lemma~\ref{lemma:seconddecomposition}.
	From now on, we assume that $V$ is rescaled in such a way that properties (i)--(v) of Lemma~\ref{lemma:seconddecomposition} hold.
	
	We arrive at a contradiction by finding a vector $u \in \{0,1\}^n$ not covered by the system $Vx=\mu$.
	The coordinates of $u$ are chosen in phases according to the partition $N_1 \cup N_2 \cup N_3 =[n]$. 
	We first choose the coordinates of $\restr{u}{N_3}$ and $\restr{u}{N_2}$ in such a way that the vector $u$ cannot be covered by any of the hyperplanes in $K_1\cup K_2 \cup K_4$. We then apply Proposition~\ref{prop:bang} to the submatrix $K_3 \times N_1$ to show that the hyperplanes cannot cover the entire cube.   
	
	The first step is to choose the coordinates of $u$ which belong to $N_3$, if $N_3 \neq \emptyset$.
	If $N_3$ is empty, we skip this step.
	As we have an essential cover, property (E\ref{essential-condition:minimal}) implies
	that there is a vertex $x$ not covered by the hyperplanes in $K_1$.
	We then set $\restr{u}{N_3} = \restr{x}{N_3}$.
	Notice that no matter how we choose the coordinates $u_j$, for $j\in N_1 \cup N_2$, the vector $u$ is not covered by the hyperplanes in $K_1$, as $V[K_1\times N_3^c]=0$.
	If $K_1 = \emptyset$, then we choose $\restr{u}{N_3}$ arbitrarily.
	
	The second step is to choose the coordinates of $u$ which belong to $N_2$.
	As before, if $N_2$ is empty, we skip this step.
	The next claim states that if $S=\Omega(\log n)$, then there exists $w \in \{0,1\}^{N_2}$ for which the hyperplanes in $K_2 \cup K_4$ do not contain any of the points in the subcube 
	$\{0,1\}^{N_1}\times w \times \restr{u}{N_3}$.
	
	\begin{claim}\label{claim:wexist}
		Let $S = \floor{C_5 \log n}$, where $C_5>0$ is a sufficiently large constant.
		Let $w \in \{0,1\}^{N_2}$ be chosen uniformly at random.
		Then, with positive probability we have 
		\[\langle w, \restr{v_i}{N_2} \rangle \neq \mu_i - \langle \restr{v_i}{N_3}, \restr{u}{N_3} \rangle - \langle \restr{v_i}{N_1}, x \rangle \]
		for all $i \in K_2 \cup K_4$ and all $x \in \{0,1\}^{N_1}$.
	\end{claim}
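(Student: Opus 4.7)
The plan is to establish the claim by a union bound. For $i \in K_2 \cup K_4$, let $B_i$ be the event that $\langle w, \restr{v_i}{N_2} \rangle = c_i - \langle \restr{v_i}{N_1}, x \rangle$ holds for some $x \in \{0,1\}^{N_1}$, where $c_i := \mu_i - \langle \restr{v_i}{N_3}, \restr{u}{N_3} \rangle$. It suffices to prove $\sum_{i \in K_2 \cup K_4} \P(B_i) < 1$.

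For $i \in K_2$, property (ii) of Lemma~\ref{lemma:seconddecomposition} gives $\restr{v_i}{N_1} = 0$, so $B_i$ collapses to the single equation $\langle w, \restr{v_i}{N_2} \rangle = c_i$. Property (iii) supplies $|\supp(\restr{v_i}{N_2})| \geq 4|K_2|^2$, so Lemma~\ref{anti-concentration:sparse-vectors} gives $\P(B_i) \leq 1/(2|K_2|)$, and summing over $K_2$ contributes at most $1/2$.

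For $i \in K_4$, as $x$ ranges over $\{0,1\}^{N_1}$, the value $\langle \restr{v_i}{N_1}, x \rangle$ lies in $[-\|\restr{v_i}{N_1}\|_1, \|\restr{v_i}{N_1}\|_1]$, so $B_i$ is contained in the interval event $|\langle w, \restr{v_i}{N_2} \rangle - c_i| \leq \|\restr{v_i}{N_1}\|_1 \leq \sqrt{n}\,\eta_i$, where $\eta_i := \|\restr{v_i}{N_1}\|_2$. By property (v) of Lemma~\ref{lemma:seconddecomposition}, $\restr{v_i}{N_1 \cup N_2}$ admits a scale partition $P_1 \cup \ldots \cup P_S$ with $N_1 \subseteq P_S$; writing $\delta_i := \|\restr{v_i}{P_S}\|_2$, we have $\eta_i \leq \delta_i$ and (since $N_1 \subseteq P_S$ forces $P_j \subseteq N_2$ for $j<S$) $N_2 \setminus P_S = P_1 \cup \ldots \cup P_{S-1}$. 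The next step is to condition on $w_1 := \restr{w}{P_S \setminus N_1}$: for each fixed $w_1$, the interval event becomes $|\langle w_2, \restr{v_i}{P_1 \cup \ldots \cup P_{S-1}} \rangle - \tilde{c}_i| \leq \sqrt{n}\,\eta_i$ for some constant $\tilde{c}_i$ depending only on $w_1$, where $w_2 := \restr{w}{N_2 \setminus P_S}$. Since $\restr{v_i}{P_1 \cup \ldots \cup P_{S-1}}$ has $S-1$ scales via the partition $P_1, \ldots, P_{S-1}$, with smallest-scale size $\|\restr{v_i}{P_{S-1}}\|_2 \geq C_1 \delta_i \geq C_1 \eta_i$, applying Lemma~\ref{lem:probBound} with $b := \max\bigl(2, \sqrt{n}\,\eta_i / \|\restr{v_i}{P_{S-1}}\|_2\bigr) \leq \max(2, \sqrt{n}/C_1)$ yields the conditional bound $C_2 \exp(-(S-1)/C_2 + C_2 \log b) = n^{O(1)} \exp(-S/C_2)$. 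Choosing $S = \lfloor C_5 \log n \rfloor$ with $C_5$ large enough makes this at most $1/(4k)$ uniformly in $w_1$, so $\P(B_i) \leq 1/(4k)$; summing over $i \in K_4$ (using $|K_4| \leq k$) contributes at most $1/4$. The total $\leq 1/2 + 1/4 < 1$ establishes the claim.

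The main technical hurdle, and the reason for conditioning on $w_1$, is that the naive scale partition induced on $N_2$ by $P_1 \cap N_2, \ldots, P_S \cap N_2$ may have its last part $P_S \setminus N_1$ of norm much smaller than $\delta_i$, which would force the parameter $b$ in Lemma~\ref{lem:probBound} to blow up uncontrollably. Removing that offending piece via conditioning preserves the $S-1$ well-separated scales on $P_1 \cup \ldots \cup P_{S-1}$ and caps $b$ by roughly $\sqrt{n}/C_1$, leaving an $O(\log n)$ logarithmic factor that is easily absorbed by the $-\Omega(S) = -\Omega(\log n)$ term with $C_5$ taken sufficiently large.
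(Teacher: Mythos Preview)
Your proof is correct and follows essentially the same approach as the paper: handle $K_2$ via Littlewood--Offord using property~(iii), and for $K_4$ isolate the smallest-scale block $P_S \supseteq N_1$, absorb its contribution (the paper bounds $\langle x, \restr{v_i}{N_1}\rangle + \langle \restr{w}{P_S\setminus N_1}, \restr{v_i}{P_S\setminus N_1}\rangle$ directly while you condition on $\restr{w}{P_S\setminus N_1}$, but these are equivalent), and apply Lemma~\ref{lem:probBound} to the remaining $S-1$ scales on $P_1\cup\cdots\cup P_{S-1}\subseteq N_2$ with $b=O(\sqrt{n})$. The only cosmetic difference is that the paper uses $\delta=\|\restr{v_i}{P_{S-1}}\|_2$ as the reference scale and gets $b=\sqrt{n}$, whereas your sharper bookkeeping with $\eta_i\le\delta_i$ gives $b\le\sqrt{n}/C_1$; this has no effect on the conclusion.
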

	
	\begin{proof}
		For simplicity, denote $\mu_i'=\mu_i - \langle \restr{v_i}{N_3}\rangle $.
		By the Littlewood--Offord Lemma (Lemma~\ref{anti-concentration:sparse-vectors}) and a union bound, we have
		\begin{align}\label{eq:k2}
			\Pr{ \bigcup_{i \in K_2}\big \{\langle \restr{v_i}{N_2}, w \rangle = \mu_i'\big \} }\le |K_2| \cdot \frac{1}{\sqrt{4|K_2|^2}} \le \frac{1}{2}.
		\end{align} 
		
		We now fix $i \in K_4$ and bound the probability that $\langle \restr{v_i}{N_2}, w \rangle = \mu_i' - \langle \restr{v_i}{N_1}, x \rangle$ for some $x \in \{0,1\}^{N_1}$.
		By property~(\ref{item:manyscales}), $\restr{v_i}{N_1 \cup N_2}$ has $S$ scales and $\restr{v_i}{N_1}$ is part of its smallest scale. 
		Let $B$ be the part of the smallest scale of $v_i$ outside the $N_1$ columns.
		We can rewrite the equation $\langle \restr{v_i}{N_2}, w \rangle = \mu_i' - \langle \restr{v_i}{N_1}, x \rangle$ as
		\begin{align}\label{eq:rephrasingeq}
			\langle \restr{w}{N_2\setminus B}, \restr{v_i}{N_2\setminus B} \rangle - \mu_i' = -\langle x, \restr{v_i}{N_1} \rangle -\langle \restr{w}{B}, \restr{v_i}{B} \rangle.
		\end{align}
		Now we bound the absolute value of the right-hand side of~\eqref{eq:rephrasingeq}.
		Let $A=[n]\setminus (N_1 \cup B)$, which are the indices consisting of the largest $S-1$ scales of $v_i$.
		Let $\delta>0$ denote the smallest scale of $\restr{v_i}{A}$.
		By the Cauchy--Schwarz inequality, for all $x' \in \{0,1\}^{N_1\cup B}$ we have
		\begin{equation*}
			|\ip{x'}{\restr{v_i}{N_1\cup B}}| \le \sqrt{n} \cdot \|\restr{v_i}{N_1\cup B}\|_2 \le \sqrt{n} \delta.
		\end{equation*}
		This implies that for any choice of $x \in \{0,1\}^{N_1}$ and $\restr{w}{B}\in \{0,1\}^{B}$ we have
		\begin{align}\label{eq:absvalue}
			\big|\langle x, \restr{v_i}{N_1} \rangle +\langle \restr{w}{B}, \restr{v_i}{B} \rangle\big|\le \delta \sqrt{n}.
		\end{align}
		
		From~\eqref{eq:rephrasingeq} and~\eqref{eq:absvalue}, it follows that if $\langle \restr{v_i}{N_2}, w \rangle = \mu_i' - \langle \restr{v_i}{N_1}, x \rangle$ holds for some $x \in \{0,1\}^{N_1}$, then
		\begin{align*}
			|\langle \restr{w}{N_2\setminus B}, \restr{v_i}{N_2\setminus B} \rangle - \mu_i'| \le \delta \sqrt{n}.
		\end{align*}
		Therefore, it suffices to bound the probability that the later inequality occurs.
		By Lemma~\ref{lem:probBound}, we have that
		\begin{align*}
			\P\left( |\langle \restr{w}{N_2\setminus B}, \restr{v_i}{N_2\setminus B} \rangle - \mu_i'| \le \delta \sqrt{n} \right) 
			\le C_2 \exp \Big(-\frac{S-1}{C_2} + C_2 \log \sqrt{n} \Big).
		\end{align*}
		Finally, by choosing $S = \floor{C_5 \log n}$ when $C_5$ is large enough, it follows that the last expression is $o(n^{-1})$.
		We complete the proof by combining this with a union bound over all rows in $K_4$ and \eqref{eq:k2}.
	\end{proof}
	
	Let $w \in \{0,1\}^{N_2}$ be the vector whose existence is given by Claim~\ref{claim:wexist} and set $\restr{u}{N_2}=w$.
	By the choice of $\restr{u}{N_2}$ and $\restr{u}{N_3}$, the hyperplanes in $K_1 \cup K_2 \cup K_4$ do not contain any of the points in the subcube 
	$\{0,1\}^{N_1} \times \restr{u}{N_2 \cup N_3}$.
	Now, we fix the coordinates of $\restr{u}{N_1}$. 
	By Lemma~\ref{lemma:seconddecomposition}, the matrix $V[K_3\times N_1]$ has column norm bounded by $W^{-1/2}$ and the size of the support of each column is upper bounded by $16k^2/n$.
	By Proposition~\ref{prop:bang}, there exists a point $x \in \{0,1\}^{N_1}$ not covered by $V[K_3\times N_1]$ as long as $W\ge (\log n) k^2/n$.
	Recall that, by Lemma~\ref{lemma:seconddecomposition}, we also need to satisfy the conditions $C_3kSW \le n/8$ and $k \le C_4 \cdot (S W)^{-2/5} \cdot n^{3/5}$,
	and hence the smaller the function $W$, the better the bound on $k$.
	Therefore, by choosing $W = (\log n)k^2/n$, the first and second conditions are satisfied when
	\[ \dfrac{c(\log n)k^2}{n} = W \le \min \left \{\dfrac{n}{k(\log n)}, \dfrac{n^{3/2}}{k^{5/2}(\log n)}\right\}, \]
	for a constant $c>0$ (depending on $C_3$, $C_4$ and $C_5$). 
	This holds if $k = O_{c} (n^{5/9}/(\log n)^{4/9})$, and hence we conclude that there exists a vertex $x \in \{0,1\}^{N_1}$ not covered by $V[K_3\times N_1]$.
	By choosing $\restr{u}{N_1}=x$, it follows that $u$ is not covered by the system $Vx=\mu$, which is a contradiction.
	
	\bibliographystyle{abbrv}
	\bibliography{bibli}
\end{document}